\newtheorem{theoreme}{Theorem}[section]
\newtheorem{lemme}[theoreme]{Lemma}
\newtheorem{proposition}[theoreme]{Proposition}
\newtheorem{expectation}[theoreme]{Expectation}
\title[A remark on the product by the hyperplane class]{A remark on the product by the hyperplane class in the Chow ring of some complete intersections}
\author{René Mboro}
\date{}
\begin{document}

\begin{abstract} By classical calculation, for a smooth hypersurface $Y\subset \mathbb P^{n+1}_{\mathbb C}$, the product by the hyperplane class is zero on homologically trivial rational cycles i.e. $H_{|Y}\cdot :{\rm CH}_i(Y)_{hom,\mathbb Q}\rightarrow {\rm CH}_{i-1}(Y)_{hom,\mathbb Q}$ is $0$ for any $i$. This note extends that result to some complete intersections.
\end{abstract}
\maketitle
\section{Introduction}
In this note, we study the multiplication by the hyperplane class in the Chow ring of some complete intersections. Namely, we are interested in the following question: for any smooth hypersurface $\iota: Y\hookrightarrow \mathbb P^{n+1}_{\mathbb C}$ of degree $d$, denoting $H=c_1(\mathcal O_{\mathbb P^{n+1}}(1))$, according to the self-intersection formula (\cite[Corollary 6.3]{fulton}) for Chow groups, the product $dH_{|Y}\cdot:{\rm CH}_i(Y)\rightarrow {\rm CH}_{i-1}(Y)$ by a multiple of the hyperplane class, is equal to the composition $\iota^*\iota*$ so that, since the cycle class gives an isomorphism ${\rm CH}^*(\mathbb P^{n+1})\overset{\sim}{\rightarrow} H^{2*}(\mathbb P^{n+1},\mathbb Z)$, the product by the hyperplane class $H_{|Y}\cdot:{\rm CH}_i(Y)_{hom,\mathbb Q}\rightarrow {\rm CH}_{i-1}(Y)_{hom,\mathbb Q}$, where ${\rm CH}_i(Y)_{hom,\mathbb Q}:={\rm ker}([\ ]:{\rm CH}_i(Y)\otimes \mathbb Q\rightarrow H^{2(n-i)}(Y,\mathbb Q))$, is zero.\\
\indent Likewise, when $\iota:Y\hookrightarrow \mathbb P^{n+r}_{\mathbb C}$ is a smooth complete intersection of dimension $n$, ${\rm deg}(Y)H_{|Y}^r:{\rm CH}_i(X)_{hom,\mathbb Q}\rightarrow {\rm CH}_{i-r}(Y)_{hom,\mathbb Q}$ is trivial. The following question (due to Voisin) is natural: what is the index of nilpotency of $H_{|Y}\cdot$ in $\oplus_i {\rm CH}^i(Y)_{hom,\mathbb Q}$?\\
\indent Again by the self-intersection formula, denoting $j:X\hookrightarrow Y$ the inclusion of a smooth hypersurface, (up to a rational number) $H_{|Y}\cdot=j_*j^*$ on ${\rm CH}_i(Y)_{hom,\mathbb Q}$.\\
\indent Now, the Bloch-Beilinson conjecture (see for example \cite[Conjecture 23.22]{vois_lect}) predicts the existence of a natural decreasing filtration $F^\bullet{\rm CH}^k(Z)_{\mathbb Q}$ on the Chow group ${\rm CH}^k(Z)_{\mathbb Q}:={\rm CH}^k(Z)\otimes \mathbb Q$ of any smooth projective variety $Z$ ($\forall k$) such that $F^1{\rm CH}^k(Z)_{\mathbb Q}={\rm CH}^k(Z)_{hom,\mathbb Q}$ and
\begin{enumerate}
\item $F^{k+1}{\rm CH}^k(Z)_{\mathbb Q}=0$;
\item for any other smooth projective variety $W$ and $\Gamma\in {\rm CH}^{\ell}(Z\times W)$, $\Gamma_*(F^i{\rm CH}^k(Z)_{\mathbb Q})\subset F^i{\rm CH}^{k+\ell-{\rm dim}(Z)}(W)_{\mathbb Q}$;
\item moreover, the induced map ${\rm Gr}^i_F(\Gamma_*):{\rm Gr}^i_F({\rm CH}^k(Z)_{\mathbb Q})\rightarrow {\rm Gr}^i_F({\rm CH}^{k+\ell-{\rm dim}(Z)}(W)_{\mathbb Q})$ is $0$ as soon as $[\Gamma]_*$ vanishes on $H^{r,s}(Z)$, $\forall s\leq k-i$ and $r+s=2k-i$.
\end{enumerate}
\textit{}\\ 
\indent According to the conjecture, to understand the action of $\Gamma$ on $F^1{\rm CH}^k(Z)_{\mathbb Q}={\rm CH}^k(Z)_{hom,\mathbb Q}$, it is sufficient to analyse its action on $H^{r,s}(Z)$ for all $i\geq 1$, $s\leq k-i$ and $r+s=2k-i$.\\
\indent With these restrictions, $(r,s)$ cannot be of the form $(p,p)$ as, writing $i=2m$, we would get $r+s=2(k-m)$ and $s\leq k-i<k-m$.\\
\indent As the non-trivial cohomology of complete intersections is concentrated in their middle cohomology and for the inclusion of an hypersurface $j:X\hookrightarrow Y$, as above, $[\Gamma_j]_*$ sends $H^{dim(X)}(X,\mathbb Q)$ to a trivial Hodge structure, the Bloch-Beilinson conjecture therefore predicts, in particular:
\begin{expectation}\label{conj_intro} Let $Y\subset \mathbb P^{n+r}_{\mathbb C}$ be a smooth complete intersection then $H_{|Y}\cdot:{\rm CH}_i(Y)_{hom,\mathbb Q}\rightarrow {\rm CH}_{i-1}(Y)_{hom,\mathbb Q}$ is zero.
\end{expectation}
The expectation is true for complete intersection surfaces since for such a surface $S$ we have $CH^1(S)_{hom}=0$. We prove:
\begin{theoreme}\label{thm_1} Let $Y\subset \mathbb P^{n+r}$ be a general (smooth) complete intersection of dimension $n\geq 3$ and type $(d_1,\dots,d_r)$, with $d_1\leq \cdots\leq d_r$, $2\leq r< n$ and $d_r>2$. Then for $H_{|Y}\cdot:{\rm CH}_i(Y)_{\mathbb Q}\rightarrow {\rm CH}_{i-1}(Y)_{\mathbb Q}$, ${\rm Im}(H_{|Y}\cdot)\subset \mathbb Q\cdot H_{|Y}^{n-(i-1)}$.\\
\indent In particular, $H_{|Y}\cdot:{\rm CH}_i(Y)_{hom, \mathbb Q}\rightarrow {\rm CH}_{i-1}(Y)_{hom, \mathbb Q}$ is zero.
\end{theoreme}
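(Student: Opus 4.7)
The plan is to exploit the self-intersection formula: for $j:X\hookrightarrow Y$ the inclusion of a smooth hyperplane section ($X=Y\cap H$ for some $H\in |\mathcal{O}_{\mathbb{P}^{n+r}}(1)|$), one has $H_{|Y}\cdot=j_*j^*$ on ${\rm CH}_i(Y)_{\mathbb{Q}}$. Crucially, $X$ is itself a smooth complete intersection of type $(d_1,\ldots,d_r)$ of dimension $n-1$ inside a $\mathbb{P}^{n+r-1}$. It thus suffices to show that for every $\alpha\in {\rm CH}^{n-i}(Y)_{\mathbb{Q}}$ the restriction $j^*\alpha\in {\rm CH}^{n-i}(X)_{\mathbb{Q}}$ is a rational multiple of $H_{|X}^{n-i}$: writing $j^*\alpha=\lambda\cdot H_{|X}^{n-i}$, the projection formula yields $j_*j^*\alpha=\lambda\cdot H_{|Y}^{n-i+1}$, which is exactly the desired containment.

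The cohomological counterpart of this restriction statement is immediate from Lefschetz hyperplane applied to $X\subset \mathbb{P}^{n+r-1}$ whenever $2(n-i)\neq n-1$, since in that range $H^{2(n-i)}(X,\mathbb{Q})=\mathbb{Q}\cdot [H_{|X}^{n-i}]$. In the exceptional middle case $2(n-i)=n-1$ (which forces $n$ odd and $i=(n+1)/2$), one lets $H$ vary: the class $[j^*\alpha]$ defines a flat section of $R^{n-1}\pi_*\mathbb{Q}$ for $\pi:\mathcal{X}\to U$ the universal family of smooth hyperplane sections, hence is monodromy-invariant. By Deligne's bigness of the monodromy action on the primitive cohomology of smooth complete intersections, the only invariants in $H^{n-1}(X_t,\mathbb{Q})$ come from the ambient projective space, i.e.\ lie in $\mathbb{Q}\cdot [H_{|X_t}^{n-i}]$.

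Having fixed $\lambda\in \mathbb{Q}$ so that $j^*\alpha-\lambda\, H_{|X}^{n-i}$ is homologically trivial, the main task --- and the principal obstacle --- is to lift this cohomological triviality to an actual rational equivalence in ${\rm CH}^{n-i}(X)_{\mathbb{Q}}$. To this end I would spread the argument over the universal family, setting $\tilde\alpha:=\pi_Y^*\alpha-\lambda\, H_{|\mathcal{X}}^{n-i}\in {\rm CH}^{n-i}(\mathcal{X})_{\mathbb{Q}}$ (with $\pi_Y:\mathcal{X}\to Y$ the evaluation map), whose fibrewise restrictions are homologically trivial. A decomposition-of-the-diagonal argument \emph{\`a la} Bloch--Srinivas / Voisin applied in the family $\mathcal{X}\to U$, exploiting the generality of $Y$ and the numerical hypotheses $r<n$ and $d_r>2$ to avoid the degenerate regimes (notably low-dimensional quadric sections) and to ensure the positivity conditions needed, should then force $\tilde\alpha$ to restrict to zero on a very general fibre $X_t$. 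Pushing forward by $j_{t*}$ concludes.
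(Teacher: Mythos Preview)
Your reduction via a hyperplane section and the cohomological analysis in the first two paragraphs are fine, but the third paragraph is where the entire difficulty lives, and what you have written there is not a proof: it is a hope. The assertion that a class $\tilde\alpha\in{\rm CH}^{n-i}(\mathcal X)_{\mathbb Q}$ which is fibrewise homologically trivial must restrict to zero on a very general fibre is simply false in general (take a constant family $X\times U$ and $\tilde\alpha=\beta\times U$ with $\beta\in{\rm CH}^{n-i}(X)_{hom,\mathbb Q}$ nonzero). The standard Bloch--Srinivas decomposition-of-the-diagonal technique requires some smallness of Chow groups --- typically representability of ${\rm CH}_0$ --- and here the hyperplane sections $X_t$ are complete intersections of general type with $h^{n-1,0}(X_t)>0$, so by Mumford--Roitman their ${\rm CH}_0$ is infinite-dimensional. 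You have not indicated what ``positivity conditions'' the hypotheses $r<n$ and $d_r>2$ are supposed to secure, nor which decomposition you intend to take; as written, step~3 assumes exactly the kind of cycle-theoretic control that the theorem is meant to establish.

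The paper's proof is quite different both in strategy and in the choice of auxiliary hypersurface. Rather than a hyperplane section, one takes $j:X\hookrightarrow Y$ with $X\in|\mathcal O_Y(d)|$ \emph{general of large degree} $d=\max\{d_r,2(n+r)\}$, so that $X$ contains no line and (crucially) ${\rm CH}_0(X)_{hom,\mathbb Q}\neq 0$. One does not attempt to show that $j^*$ has small image in ${\rm CH}^*(X)$; instead one proves directly that $j_*:{\rm CH}_k(X)_{\mathbb Q}\to{\rm CH}_k(Y)_{\mathbb Q}$ lands in $\mathbb Q\cdot h_Y^{n-k}$. This is obtained from an explicit relation in ${\rm CH}_{n-1}(X\times X\times Y)_{\mathbb Q}$: using excess-intersection computations with the incidence variety of collinear triples in $(\mathbb P^{n+r})^3$, one expresses a nonzero multiple of the small diagonal $\gamma=\{(x,x,x)\}$ as a sum of a cycle $Z$ built from lines meeting $X$ with prescribed tangency, push-forwards of polynomials in hyperplane classes along the partial diagonals, and a polynomial in the $h$'s. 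The infinite-dimensionality of ${\rm CH}_0(X)$ is then used to pin down the coefficient in front of $\gamma$ (by acting on $z\times Y$ for a nonzero $z\in{\rm CH}_0(X)_{hom}$). Finally, letting this identity act as a correspondence on $U\times X$ for $U\subset X$ an integral subvariety yields $[U]\in\mathbb Q\cdot h_Y^{n-\dim U}$ in ${\rm CH}_*(Y)_{\mathbb Q}$. The hypothesis $d_r>2$ enters in a dimension count ensuring that lines of $Y$ do not contaminate the relevant fibre of the incidence map, and $r<n$ guarantees that the rank $n-r$ bundle governing the line condition is positive. None of this machinery is captured by the spreading heuristic you propose.
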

The proof of the theorem goes by decomposing the action of the graph $\Gamma\in {\rm CH}_{n-1}(X\times Y)$ of the inclusion of a hypersurface $X\subset Y$ of high degree, using the relation of $X$ and $Y$ with the lines of $\mathbb P^{n+r}$.\\
\indent However, to have a control on the coefficient of $\Gamma$ in the relation we want to exhibit with the lines, we examine rather the correspondence $\gamma:=\{(x,x,x)\in X\times X\times Y, x\in X\}$, which satisfies $\Gamma=pr_{23,*}(\gamma)$. As a $X$ of degree high enough has, by Mumford theorem (rather its generalization by Roitman \cite{roitman}, see also \cite[Th\'eor\`eme 22.16 and Corollaire 22.18]{vois_lect}), non trivial $0$-cycles $z\in {\rm CH}_0(X)$ and for such a cycle $\gamma_*(z\times Y)=z\in {\rm CH}_0(X)$, we can have access to the coefficient of $\gamma$ in a relation between correspondences in $X\times X\times Y$.\\
\indent The method is inspired by \cite{vois_decomp} and \cite{fu_decomp}, where the respective authors were interested in exhibiting a relation between the small diagonal of a hypersurface (resp. complete intersections) $Y$ and the cycle made up of points of $Y$ lying on a line contained in $Y$. Here, in order to decompose $\gamma$, we use rather, the lines that have some special contact data with the smallest complete intersection $X$.\\ 
\indent The method seems quite artificial (probably not the most appropriate) but the result it yields is a strong evidence for Expectation \ref{conj_intro}.\\

In the rest of the note, all the Chow groups will be considered with rational coefficients.

\section{Decompostion of the graph of the inclusion of a hypersurface}
Let $Y\subset \mathbb P^{n+r}$ be a smooth complete intersection of dimension $n\geq 3$ and type $(d_1,\dots,d_r)$, with $2\leq d_1\leq \cdots\leq d_r$, $2\leq r< n$ and $d_r>2$. Set $d={\rm max}\{d_r,2(n+r)\}$ and consider a general smooth hypersurface $X\in |\mathcal O_Y(d)|$ that, in particular contains no line. It is possible since a line lying on $X$ would also lie on a general degree $d$ hypersurface of $\mathbb P^{n+r}$. But the variety of lines of the latter is empty since it is cut out from the Grassmannian $G(2,n+r+1)$ by a regular section of ${\rm Sym}^d\mathcal E_2$, which is a rank $d+1>dim(G(2,n+r+1))$ vector bundle, where $\mathcal E_2$ is the rank $2$ quotient bundle on $G(2,n+r+1)$.\\
\indent Let us define $\gamma=\{(x,x,x)\in X\times X\times Y,\ x\in X\}$ and $\Gamma=\{(x,x)\in X\times Y,\ x\in X\}$. As explained in the introduction, we begin by exhibiting a relation in ${\rm CH}_{n-1}(X\times X\times Y)_{\mathbb Q}$ between $\gamma$ and a cycle constructed using the lines of $\mathbb P^{n+r}$.\\

\indent We start by collecting some constructions and results from \cite{fu_decomp}. Let us define $$W:=\{(x_1,x_2,x_3)\in (\mathbb P^{n+r})^3\backslash \delta_{\mathbb P^{n+r}},\ x_1,x_2,x_3\ {\rm are\ colinear}\}$$ where $\delta_{\mathbb P^{n+r}}=\{(y,y,y)\in (\mathbb P^{n+r})^3, y\in \mathbb P^{n+r}\}$ is the small diagonal of $\mathbb P^{n+r}$ and let us denote $W_0\subset W$ the open subset where the $x_i$'s are pairwise distinct. Then $W$ is readily seen to be the variety $\mathbb P(\mathcal E_2)\times_{G(2,n+r+1)}\mathbb P(\mathcal E_2)\times_{G(2,n+r+1)}\mathbb P(\mathcal E_2)\backslash \delta_{\mathbb P(\mathcal E_2)}$, where again $\delta_{\mathbb P(\mathcal E_2)}=\{([\ell],x),([\ell],x),([\ell],x))\in \mathbb P(\mathcal E_2)\times_{G(2,n+r+1)}\mathbb P(\mathcal E_2)\times_{G(2,n+r+1)}\mathbb P(\mathcal E_2),\ ([\ell],x)\in \mathbb P(\mathcal E_2)\}$ is the small diagonal. In particular, $W$ is smooth irreducible of dimension $3+dim(G(2,n+r+1))=2n+2r+1$.\\
\indent We have a natural morphism $t:W\rightarrow G(2,n+r+1)$ and we can consider the diagram: 
\begin{equation}\label{diag_W_G_P}
\xymatrix{\mathbb P(t^*\mathcal E_2)\ar[d]^{p_W}\ar[r]^{q_W} &\mathbb P^{n+r}\\ W &}
\end{equation}
The $\mathbb P^1$-bundle $p_W$ admits three taulogical sections $\sigma_{W,i}:W\rightarrow \mathbb P(t^*\mathcal E_2)$ determined by the point $x_i$, for $i=1,2,3$. Let us denote $D_{W,i}=\sigma_{W,i}(W)$, $i=1,2,3$; for each $i=1,2,3$, $D_{W,i}$ is a divisor in $\mathbb P(t^*\mathcal E_2)$ whose class in ${\rm Pic}(\mathbb P(t^*\mathcal E_2))$ is of the form$q_W^*\mathcal O_{\mathbb P^{n+r}}(1)\otimes p_{W}^*\alpha_i$ for some line bundle $\alpha_i$ on $W$.\\
\indent Let us also introduce $V_0:=\{(x_1,x_2,y)\in X^2\times Y\backslash \gamma,\ x_1,x_2,y\ {\rm are\ colinear\ and\ pairwise\ distinct}\}$ its closure $V:=\overline{V_0}$ in $X^2\times Y\backslash \gamma$. We have $V\backslash V_0=A_{12}\cup B_{13}\cup B_{23}$, with $A_{12}=\{(x,x,y)\in X^2\times Y\backslash \gamma,\ \ell_{x,y}\ {\rm is\ tangent\ to}\ X\ {\rm at}\ x\}$, $B_{13}=\{(x_1,x_2,x_1)\in X^2\times Y\backslash\gamma,\ \ell_{x_1,x_2}\ {\rm is\ tangent\ to}\ Y\ {\rm at}\ x_1\}$ and $B_{23}=\{(x_1,x_2,x_2)\in X^2\times Y\backslash\gamma,\ \ell_{x_1,x_2}\ {\rm is\ tangent\ to}\ Y\ {\rm at}\ x_2\}$, where for any pair $(a,b)\in (\mathbb P^{n+r})^2\backslash \Delta_{\mathbb P^{n+r}}$, $\ell_{a,b}$ denotes the line defined by the pair. We have the following:

\begin{lemme}\label{lem_V_0_non_empty} For any smooth complete intersection $Y$ of type $(d_1,\dots,d_r)$ and any hypersurface $X\in |\mathcal O_Y(d)|$, $V_0$ is non-empty and of dimension at least $2n-r-1$. For $Y$ and $X$ general, $V_0$ is smooth, irreducible of dimension $2n-r-1$.
\end{lemme}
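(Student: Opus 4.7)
The plan is to view $V_0$ as an open subset of a proper intersection in $(\mathbb P^{n+r})^3$ and extract the dimension estimate from a standard intersection-theoretic inequality, combined with a direct construction of elements of $V_0$ via trisecant lines to $Y$; the genericity statement will follow from a Bertini-style argument in the universal family over the parameter space of complete intersections.

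First, for the dimension lower bound, $V_0$ sits as an open subvariety of $W\cap (X\times X\times Y)\subset (\mathbb P^{n+r})^3$. Since $W$ is smooth irreducible of dimension $2n+2r+1$ and $X\times X\times Y$ is smooth of codimension $3r+2$ in the smooth ambient $(\mathbb P^{n+r})^3$, the standard inequality for intersections in smooth varieties guarantees that every irreducible component of $W\cap (X\times X\times Y)$, and hence of $V_0$, has dimension at least $(2n+2r+1)-(3r+2)=2n-r-1$.

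For non-emptiness, introduce the trisecant variety $V_Y:=\{(y_1,y_2,y_3)\in Y^3:\text{collinear and pairwise distinct}\}$. I would first show that for every $y_1\in Y$ there exist trisecant lines to $Y$ through $y_1$: directions $v\in\mathbb P^{n+r-1}$ of lines $\ell_v=y_1+\mathbb C v$ trisecant to $Y$ correspond to $v$ such that the $r$ depressed polynomials $F_{d_i}(y_1+tv)/t$ (each of degree $d_i-1\geq 1$) have two common roots in $t$, a codimension-$2(r-1)$ condition which, under the hypothesis $n>r$, cuts out a non-empty subvariety of $\mathbb P^{n+r-1}$ of dimension at least $n-r+1$. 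Hence the first projection $V_Y\to Y$ is dominant, and the image $V_Y^{(2)}\subset Y\times Y$ is irreducible of dimension $2n-r+1$, surjective onto $Y$ through both projections. Since $X$ is an ample divisor on $Y$, the successive cuts $V_Y^{(2)}\cap (X\times Y)$ and $V_Y^{(2)}\cap (X\times X)$ remain non-empty, each reducing dimension by exactly one, leaving a non-empty subvariety of $V_Y^{(2)}\cap (X\times X)$ of dimension at least $2n-r-1$; lifting any of its points via $V_Y\to V_Y^{(2)}$ produces an element of $V_0$.

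For the genericity statement, consider the universal incidence variety
\[
\widetilde{\mathcal V}_0:=\{(Y,X,x_1,x_2,y):(x_1,x_2,y)\in V_0(Y,X)\}
\]
parametrized by $\mathcal B:=\prod_i|\mathcal O_{\mathbb P^{n+r}}(d_i)|\times|\mathcal O_{\mathbb P^{n+r}}(d)|$. The second projection $\widetilde{\mathcal V}_0\to W$ has fibers equal to the linear subspace of $\mathcal B$ cut out by the $3r+2$ evaluation conditions $F_{d_i}(x_1)=F_{d_i}(x_2)=F_{d_i}(y)=0$ for all $i$ and $F(x_1)=F(x_2)=0$, which are linearly independent because $d_i,d\geq 2$ guarantees that the corresponding complete linear systems separate three collinear points. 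It follows that $\widetilde{\mathcal V}_0$ is smooth irreducible of dimension $\dim\mathcal B+2n-r-1$, and generic smoothness of the projection to $\mathcal B$ yields the smoothness, irreducibility, and exact dimension of $V_0$ for general $(Y,X)$.

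The main technical obstacle is the non-emptiness argument: verifying that the dimension count for trisecants through an \emph{arbitrary} $y_1\in Y$ is achieved (rather than only for generic $y_1$) requires the local resultant/common-root analysis above; once this is in place, the rest of the argument is intersection-theoretic bookkeeping.
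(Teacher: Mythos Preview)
Your approach differs from the paper's. The paper realizes $V_0$ as the zero locus on $W_0$ of a section of an explicit globally generated rank-$(3r+2)$ quotient bundle $E=E'/E''$ on $W$, where $E'\simeq\oplus_i{\rm Sym}^{d_i}\mathcal E_2\oplus{\rm Sym}^d\mathcal E_2$ and $E''$ is the sub-bundle of restrictions divisible by the tautological sections $s_{W,j}$ to the prescribed orders. The codimension bound is then simply the rank of $E$, and smoothness plus expected dimension for general $(Y,X)$ is Bertini for sections of globally generated bundles; irreducibility is argued via chain-connectedness. This bundle packaging is reused throughout the subsequent excess-intersection analysis, so the paper's formulation is not merely a proof device. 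Your ambient-intersection inequality and your incidence-variety Bertini are valid substitutes for the dimension bound and the genericity claims respectively, but they do not hand you these bundles for later use.

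There is a gap in your non-emptiness argument. The divisor $X\times Y$ on $Y\times Y$ is only $pr_1$-nef, not ample, so ``successive cuts by an ample divisor'' is not literally what you are doing; the first cut is justified by surjectivity of $pr_1$ on $V_Y^{(2)}$, but for the second you need that $pr_2$ restricted to $V_Y^{(2)}\cap(X\times Y)$ still meets $X$, which you have not shown. (It does hold: any component of $V_Y^{(2)}\cap(X\times Y)$ has dimension at least $2n-r>n-1=\dim X$, so its $pr_2$-image in $Y$ is positive-dimensional and hence meets the ample divisor $X$.) You also correctly flag that the trisecant count through an arbitrary $y_1$ gives only an expected dimension; you do not carry out the promised resultant analysis showing this locus is non-empty for every $y_1$, and irreducibility of $V_Y^{(2)}$ is asserted without proof. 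For the paper's downstream purposes only general $(Y,X)$ matter, and there non-emptiness falls out of your own incidence computation: $\widetilde{\mathcal V}_0\to\mathcal B$ is dominant of positive relative dimension.
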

\begin{proof} Denoting $s_{W,j}\in |\mathcal O_{\mathbb P(t^*\mathcal E_2)}(D_{W,j})|$ the section defining $D_{W,j}$, $j=1,2,3$, we have, for each $i=1,\dots r$ an inclusion of lines bundles $q_W^*\mathcal O_{\mathbb P^{n+r}}(d_i)\otimes \mathcal O_{\mathbb P(t^*\mathcal E_2)}(-D_{W,1}-D_{W,2}-D_{W,3})\hookrightarrow q_W^*\mathcal O_{\mathbb P^{n+r}}(d_i)$ given by $s_{W,1}\otimes s_{W,2}\otimes s_{W,3}$ and an inclusion $q_W^*\mathcal O_{\mathbb P^{n+r}}(d)\otimes \mathcal O_{\mathbb P(t^*\mathcal E_2)}(-D_{W,1}-D_{W,2})\overset{s_{W,1}\otimes s_{W,2}}{\hookrightarrow} q_W^*\mathcal O_{\mathbb P^{n+r}}(d)$. Let us introduce the following vector bundles on $W$ $$\begin{aligned}E''&:=p_{W,*}(\oplus_i q_W^*\mathcal O_{\mathbb P^{n+r}}(d_i)\otimes \mathcal O_{\mathbb P(t^*\mathcal E_2)}(-D_{W,1}-D_{W,2}-D_{W,3})\oplus q_W^*\mathcal O_{\mathbb P^{n+r}}(d)\otimes \mathcal O_{\mathbb P(t^*\mathcal E_2)}(-D_{W,1}-D_{W,2}))\\
&\simeq \oplus_i {\rm Sym}^{d_i-3}\mathcal E_2\otimes \alpha_1\otimes\alpha_2\otimes\alpha_3\oplus {\rm Sym}^{d-2}\mathcal E_2\otimes \alpha_1\otimes \alpha_2\end{aligned}$$ (with ${\rm Sym}^{-1}\mathcal E_2=0$) and
$$\begin{aligned}E':=p_{W,*}(\oplus_i q_W^*\mathcal O_{\mathbb P^{n+r}}(d_i)\oplus q_W^*\mathcal O_{\mathbb P^{n+r}}(d))\simeq \oplus_i {\rm Sym}^{d_i}\mathcal E_2\oplus {\rm Sym}^d\mathcal E_2.\end{aligned}$$
The above inclusions of lines bundles give rise to an inclusion of vector bundles $E''\hookrightarrow E'$. The quotient $E\simeq E'/E''$ is a globally generated ($E'$ is) vector bundle of rank $\sum_{i=1}^r(d_i+1)-(d_i-3+1)+(d+1)-(d-2+1)=3r+2$. ``The'' equations of $Y$ and $X$ give rise to a section $\eta_{X,Y}\in H^0(W,E')$ and its projection to a section $\overline{\eta_{X,Y}}\in H^0(W,E)$. The zero locus of $\overline{\eta_{X,Y|W_0}}$ is supported on $V_0$ and has codimension at most $3r+2$. In particular $V_0$ is of dimension at least $2n-r-1>0$.\\
\indent Since $E$ is globally generated, by Bertini type theorems, for $Y$ and $X$ general, the zero locus of $\overline{\eta_{X,Y|W_0}}$ is smooth of the expected dimension $2n-r-1$. Moreover, it is easy to see that any two points of $V_0$ can be connected by chain of curves in particular $V_0$ is connected.
\end{proof}

The following is an analog of \cite[Lemma 1.3]{fu_decomp}
\begin{proposition}\label{prop_intersect_V_diagonals_excess_bundles} (1) The intersection scheme of $W$ and $X^2\times Y\backslash \gamma$ in $(\mathbb P^{n+r})^3\backslash \delta_{\mathbb P^{n+r}}$ has four irreducible components: $$W\cap (X^2\times Y\backslash \gamma)= V\cup \Delta_{12}\cup \Gamma_{13}\cup \Gamma_{23}$$
where $\Delta_{12}:=\{(x,x,y)\in X^2\times Y\backslash \gamma,\ (x,y)\in X\times Y\backslash \Gamma\}$, $\Gamma_{13}:=\{(x_1,x_2,x_1)\in X^2\times Y\backslash \gamma,\ (x_1,x_2)\in X^2\backslash \Delta_X\}$ and $\Gamma_{23}:=\{(x_1,x_2,x_2)\in X^2\times Y\backslash \gamma,\ (x_1,x_2)\in X^2\backslash \Delta_X\}$.\\
\indent For $Y$ and $X$ general, the intersection along $V$ is transversal. The intersection along $\Delta_{12}$ has excess dimension $r$ and multiplicity $1$ and the intersection along the $\Gamma_{i3}$'s ($i=1,2$) has excess dimension $r-1$ and their respective multiplicity is $1$.\\

\indent (2) When $\Delta_{12}$ is identified with $X\times Y\backslash \Gamma$, the excess normal sheaf, in the sense of \cite{fulton}, along $\Delta_{12}\backslash V$, that we will denote $Exc(\Delta_{12}\backslash V)$ is a rank $r$ vector bundle isomorphic to the quotient $pr_X^*N_{X/\mathbb P^{n+r}}/(pr_X^*\mathcal O_X(1)\otimes pr_Y^*\mathcal O_Y(-1))$.\\

\indent (3) Under the identification $\Gamma_{i3}\simeq X\times X\backslash \Delta_X$, the excess normal sheaf along $\Gamma_{i3}\backslash V$ is a rank $r-1$ vector bundle isomorphic to the quotient $Exc(\Gamma_{i3}\backslash V)\simeq pr_1^*N_{Y/\mathbb P^{n+r}|X}/(pr_1^*\mathcal O_X(1)\otimes pr_2^*\mathcal O_X(-1))$.
\end{proposition}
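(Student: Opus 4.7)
The plan is to first dispatch part (1) by a direct analysis. Since $W$ is the closure of pairwise distinct colinear triples in $(\mathbb{P}^{n+r})^3 \setminus \delta_{\mathbb{P}^{n+r}}$, a point $(x_1, x_2, y) \in W \cap (X^2 \times Y \setminus \gamma)$ is either pairwise distinct (giving $V$) or has exactly one coincidence, yielding $\Delta_{12}, \Gamma_{13}, \Gamma_{23}$; the case $x_1 = x_2 = y$ is excluded by removing $\gamma$. Setting $M := (\mathbb{P}^{n+r})^3 \setminus \delta_{\mathbb{P}^{n+r}}$, the expected intersection dimension $\dim W + \dim(X^2 \times Y) - \dim M = 2n - r - 1$ coincides with $\dim V$ by Lemma \ref{lem_V_0_non_empty}, while $\dim \Delta_{12} = 2n - 1$ and $\dim \Gamma_{i3} = 2n - 2$ give the claimed excess dimensions $r$ and $r - 1$. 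Transversality along $V$ then follows from the smoothness assertion of Lemma \ref{lem_V_0_non_empty}.

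For the multiplicity $1$ assertions along $\Delta_{12}$ and $\Gamma_{i3}$, my plan is to check at a generic point that the Zariski tangent spaces $T_W$ and $T_{X^2 \times Y}$ inside $T_M$ intersect in exactly the tangent space of the stated component. Using the fiber product description $W = \mathbb{P}(\mathcal{E}_2)^{\times_G 3} \setminus \delta_{\mathbb{P}(\mathcal{E}_2)}$, the tangent to $W$ at $(x, x, y) \in \Delta_{12}$ is realized as the codimension-$(n + r - 1)$ subspace $\{(u_1, u_2, w) : u_1 - u_2 \in T_x \ell_{x, y}\} \subset T_M$. Intersecting with $T_{X^2 \times Y} = T_x X \oplus T_x X \oplus T_y Y$, the assumption $(x, y) \notin A_{12}$, i.e.\ $\ell_{x, y}$ is not tangent to $X$ at $x$, gives $T_x \ell \cap T_x X = 0$ and forces $u_1 = u_2$, recovering exactly $T_{\Delta_{12}}$. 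The argument along $\Gamma_{13}$ is entirely analogous: $T_W = \{(u_1, u_2, u_3) : u_1 - u_3 \in T_{x_1} \ell_{x_1, x_2}\}$, and the non-tangency of $\ell_{x_1, x_2}$ to $Y$ at $x_1$ outside $B_{13}$ forces $u_3 = u_1 \in T_{x_1} X$, yielding $T_{\Gamma_{13}}$.

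The excess normal bundles in (2) and (3) are then identified as the cokernels $T_M / (T_W + T_{X^2 \times Y})|_Z$ (of rank $r$ or $r - 1$, by the preceding tangent space analysis). Along $\Delta_{12}$, the projection $(u_1, u_2, w) \mapsto u_1 - u_2$ identifies this cokernel with $T_x \mathbb{P}^{n+r} / (T_x \ell + T_x X)$; the short exact sequence
\[
0 \to T_x \ell \to N_{X/\mathbb{P}^{n+r}, x} \to T_x \mathbb{P}^{n+r} / (T_x \ell + T_x X) \to 0
\]
then realizes it as the quotient of $N_{X/\mathbb{P}^{n+r}}|_x$ by the line $T_x \ell_{x, y}$. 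Globalizing over $\Delta_{12} \simeq X \times Y \setminus \Gamma$, the canonical identification $T_x \ell_{x, y} \cong \langle x \rangle^\vee \otimes \langle y \rangle$ recognizes this sub-line-bundle as $pr_X^* \mathcal{O}_X(1) \otimes pr_Y^* \mathcal{O}_Y(-1)$. The computation for $\Gamma_{i3}$ is parallel with $Y$ replacing $X$ in the normal direction (the coincidence is between a point of $X$ and a point of $Y$, so the relevant normal is $N_{Y/\mathbb{P}^{n+r}}$) and $x_2$ replacing $y$. The main technical point is the description of $T_W$ at boundary points where two of the three coordinates coincide, for which the $\mathbb{P}^1$-bundle-over-Grassmannian structure of $W$ is essential rather than treating $W$ purely as a subvariety of $(\mathbb{P}^{n+r})^3$.
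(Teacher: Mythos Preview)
Your argument is correct and reaches the same conclusions as the paper, but the route is genuinely different. The paper follows Fu's template: it invokes Fulton's formula $Exc(Z)=j^{*}N_{i_{1}}/N_{i_{2}}$ for the regular embeddings $i_{1}\colon X^{2}\times Y\setminus\gamma\hookrightarrow M$ and $i_{2}\colon Z\hookrightarrow W$, computes $j^{*}N_{i_{1}}$ as a direct sum of pulled-back normal bundles, and then determines $N_{i_{2}}$ by factoring $i_{2}$ through $(\mathbb{P}^{n+r})^{2}\setminus\Delta_{\mathbb{P}^{n+r}}\hookrightarrow W$ and identifying $N_{i_{2}''}$ with the relative tangent bundle $T_{\mathbb{P}(\mathcal{E}_{2})/G}$ via the Euler sequence. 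You instead work pointwise: you describe $T_{W}$ explicitly at a boundary point (using the fibre-product presentation of $W$), verify $T_{W}\cap T_{X^{2}\times Y}=T_{Z}$ directly from the non-tangency hypotheses (this is your multiplicity-one argument), and then read off $Exc(Z)\simeq T_{M}/(T_{W}+T_{X^{2}\times Y})$ as a quotient of a single normal bundle $N_{X/\mathbb{P}}$ or $N_{Y/\mathbb{P}}$ by the tangent line $T_{x}\ell$, which you globalise to $\mathcal{O}(1)\boxtimes\mathcal{O}(-1)$ via $T_{x}\ell_{x,y}\cong\langle x\rangle^{\vee}\otimes\langle y\rangle$.

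What each approach buys: the paper's computation is more modular (it plugs directly into Fulton's excess-intersection formalism without ever naming a tangent vector) and mirrors \cite{fu_decomp} closely, which makes the analogy transparent. Your approach is more elementary and geometric, and it delivers the multiplicity-one statement and the excess bundle in a single stroke rather than as separate computations. One point you leave implicit is that the identification $Exc(Z)\simeq T_{M}/(T_{W}+T_{X^{2}\times Y})$ \emph{is} Fulton's excess sheaf precisely because your tangent-space equality $T_{W}\cap T_{X^{2}\times Y}=T_{Z}$ guarantees that $N_{i_{2}}\hookrightarrow j^{*}N_{i_{1}}$ is a sub-bundle on $Z\setminus V$; this is routine but worth stating, since otherwise the link to the refined Gysin machinery used downstream (Proposition~\ref{prop_relation_without_diag}) is not explicit.
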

\begin{proof} (1) It follows essentially from Lemma \ref{lem_V_0_non_empty} (see \cite[Lemma 1.3]{fu_decomp}).\\

\indent (2) We follow the arguments of the proof of \cite[Lemma 1.6]{fu_decomp}. Let us consider the commutative diagram:
$$\xymatrix{X\times Y\backslash \Gamma\ar@{^{(}->}[dr]^{i_2'}\ar@{^{(}->}[rr]^{i_2}\ar@{^{(}->}[dd]^{j} & &W\ar@{^{(}->}[dd]^i\\
 &(\mathbb P^{n+r})^2\backslash \Delta_{\mathbb P^{n+r}}\ar@{^{(}->}[ur]^{i_2''} &\\
 X^2\times Y\backslash\gamma\ar@{^{(}->}[rr]_{i_1} &  &(\mathbb P^{n+r})^3\backslash \delta_{\mathbb P^{n+r}}}$$
As $i_1$ and $i_2$ are regular imbeddings, according to \cite[6.3]{fulton}, $Exc(\Delta_{12}\backslash V)\simeq j^*N_{i_1}/N_{i_2}$ where we denote $N_f$ the normal bundle associated to the regular imbedding $f$.\\
\indent Now $j^*N_{i_1}\simeq j^*(pr_1^{X^2\times Y,*}N_{X/\mathbb P^{n+r}}\oplus pr_2^{X^2\times Y,*}N_{X/\mathbb P^{n+r}}\oplus pr_3^{X^2\times Y,*}N_{Y/\mathbb P^{n+r}})\simeq (pr_X^*N_{X/\mathbb P^{n+r}})^{\oplus 2}\oplus pr_Y^*N_{Y/\mathbb P^{n+r}}$.\\
The normal bundle $N_{i_2}$ sits in the exact sequence $$0\rightarrow N_{i_2'}\rightarrow N_{i_2}\rightarrow N_{i_2''|X\times Y\backslash \Gamma}\rightarrow 0.$$
We have $N_{i_2'}\simeq pr_X^*N_{X/\mathbb P^{n+r}}\oplus pr_Y^*N_{Y/\mathbb P^{n+r}}$ and for $i_2''$, using the isomorphisms $$W\simeq \mathbb P(\mathcal E_2)\times_{G(2,n+r+1)}\mathbb P(\mathcal E_2)\times_{G(2,n+r+1)}\mathbb P(\mathcal E_2)\backslash \delta_{\mathbb P(\mathcal E_2)}$$ and $$(\mathbb P^{n+r})^2\backslash \Delta_{\mathbb P^{n+r}}\overset{e}{\xrightarrow{\sim}}\mathbb P(\mathcal E_2)\times_{G(2,n+r+1)}\mathbb P(\mathcal E_2)\backslash \Delta_{\mathbb P(\mathcal E_2)}$$
(that associates to a pair $x_1\neq x_2$ the lines they determine) we see that $N_{i_2''}\simeq e^*pr^{\mathbb P(\mathcal E_2)\times_G \mathbb P(\mathcal E_2),*}_1T_{\mathbb P(\mathcal E_2)/G(2,n+r+1)}$.\\
\indent The evaluation of a linear polynomial in two distinct points of a line gives $pr_1^{\mathbb P(\mathcal E_2)\times_G \mathbb P(\mathcal E_2),*}p_G^*\mathcal E_2\simeq pr_1^{\mathbb P(\mathcal E_2)\times_G \mathbb P(\mathcal E_2),*}q_G^*\mathcal O_{\mathbb P^{n+r}}(1)\oplus pr_2^{\mathbb P(\mathcal E_2)\times_G \mathbb P(\mathcal E_2),*}q_G^*\mathcal O_{\mathbb P^{n+r}}(1)$ with reference to $\xymatrix{\mathbb P(\mathcal E_2)\ar[d]^{p_G}\ar[r]^{q_G} &\mathbb P^{n+r}\\G(2,n+r+1) &}$. Hence, using the usual exact sequence for the relative tangent bundle $$0\rightarrow O_{\mathbb P(\mathcal E_2)}\rightarrow p_G^*\mathcal E_2^\vee\otimes q_G^*\mathcal O_{\mathbb P^{n+r}}(1)\rightarrow T_{\mathbb P(\mathcal E_2)/G(2,n+r+1)}\rightarrow 0$$ we get $e^*pr_1^{\mathbb P(\mathcal E_2)\times_G \mathbb P(\mathcal E_2),*}T_{\mathbb P(\mathcal E_2)/G(2,n+r+1)}\simeq pr_2^*\mathcal O_{\mathbb P^{n+r}}(-1)\otimes pr_1^*\mathcal O_{\mathbb P^{n+r}}(1)$. Putting everything together, we get $Exc(\Delta_{12}\backslash V)=pr_X^*N_{X/\mathbb P^{n+r}}/(pr_Y^*\mathcal O_Y(-1)\otimes pr_X^*\mathcal O_X(1))$.\\

\indent (3) In view of the isomorphism $\Gamma_{13}\xrightarrow{\sim} X\times X\backslash \Delta_X$, $(x_1,x_2,x_1)\mapsto (x_1,x_2)$, let us consider the diagram:
  $$\xymatrix{X\times X\backslash \Delta_X\ar@{^{(}->}[dr]^{i_2'}\ar@{^{(}->}[rr]^{i_2}\ar@{^{(}->}[dd]^{j} & &W\ar@{^{(}->}[dd]^i\\
 &(\mathbb P^{n+r})^2\backslash \Delta_{\mathbb P^{n+r}}\ar@{^{(}->}[ur]^{i_2''} &\\
 X^2\times Y\backslash\gamma\ar@{^{(}->}[rr]_{i_1} &  &(\mathbb P^{n+r})^3\backslash \delta_{\mathbb P^{n+r}}}$$
 As $i_1$ and $i_2$ are regular imbeddings, according to \cite[6.3]{fulton}, $Exc(\Gamma_{13}\backslash V)=j^*N_{i_1}/N_{i_2}$. We have $$\begin{aligned}j^*N_{i_1}&\simeq j^*(pr_1^{X^2\times Y,*}N_{X/\mathbb P^{n+r}}\oplus pr_2^{X^2\times Y,*}N_{X/\mathbb P^{n+r}}\oplus pr_3^{X^2\times Y,*}N_{Y/\mathbb P^{n+r}})\\
 &\simeq pr_1^{X^2,*}(N_{X/\mathbb P^{n+r}}\oplus N_{Y/\mathbb P^{n+r}|X})\oplus pr_2^{X^2,*}N_{X/\mathbb P^{n+r}}\end{aligned}$$
 The normal bundle $N_{i_2}$ sits again in the exact sequence $$0\rightarrow N_{i_2'}\rightarrow N_{i_2}\rightarrow N_{i_2''|X^2\backslash \Delta_X}\rightarrow 0.$$ We have $N_{i_2'}\simeq pr_1^*N_{X/\mathbb P^{n+r}}\oplus pr_2^*N_{X/\mathbb P^{n+r}}$ and $N_{i_2''}$ has already been computed. Hence $Exc(\Gamma_{13}\backslash V)=pr_1^{X^2,*}N_{Y/\mathbb P^{n+r}|X}/(pr_1^{X^2,*}\mathcal O_X(1)\otimes pr_2^{X^2,*}\mathcal O_X(-1))$.\\
\indent Likewise $Exc(\Gamma_{23}\backslash V)=pr_2^*N_{Y/\mathbb P^{n+r}|X}/(pr_1^{X^2,*}\mathcal O_X(1)\otimes pr_2^{X^2,*}\mathcal O_X(-1))$.
\end{proof}

Let us introduce \begin{equation}\label{def_Z_0}Z_0:=\{(x_1,x_2,y)\in V_0,\ P_{d|\ell_{x_1,x_2,y}}\in L_{x_1}^{n-r+1}L_{x_2}H^0(\mathcal O_{\ell_{x_1,x_2,y}}(d-n+r-2))\}\end{equation} where $P_d\in |\mathcal O_{\mathbb P^{n+r}}(d)|$ is a section whose restriction to $Y$ defines $X$, $\ell_{x_1,x_2,y}$ is the line determined by $(x_1,x_2,y)$ and $L_{x_i}$ is the (linear) equation of $x_i\in \ell_{x_1,x_2,y}$. Set $Z=\overline{Z_0}$ in $V$.\\
\indent Let us also introduce the following vector bundles on $W$ (with notations from (\ref{diag_W_G_P}))
$$F':=p_{W,*}(q_W^*\mathcal O_{\mathbb P^{n+r}}(d)\otimes \mathcal O_{\mathbb P(t^*\mathcal E_2)}(-D_{W,1}-D_{W,2}))$$ and $$F'':=p_{W,*}(q_W^*\mathcal O_{\mathbb P^{n+r}}(d)\otimes \mathcal O_{\mathbb P(t^*\mathcal E_2)}(-(n-r+1)D_{W,1}-D_{W,2})).$$
The inclusions of lines bundles $q_W^*\mathcal O_{\mathbb P^{n+r}}(d)\otimes \mathcal O_{\mathbb P(t^*\mathcal E_2)}(-(n-r+1)D_{W,1}-D_{W,2})\overset{s_{W,1}^{\otimes (n-r)}}{\hookrightarrow} q_W^*\mathcal O_{\mathbb P^{n+r}}(d)\otimes \mathcal O_{\mathbb P(t^*\mathcal E_2)}(-D_{W,1}-D_{W,2})$ give rise to an inclusion $F''\hookrightarrow F'$. Let us denote $F:=F'/F''$ the quotient sheaf. It is a rank $(d-2+1)-(d-n+r-2+1)=n-r$ vector bundle on $W$.

\begin{lemme}\label{lem_Z_0_chern_class} We have the following equality in ${\rm CH}_{n-1}(X^2\times Y\backslash \gamma)$ $$Z=i_{V,*}(c_{n-r}(F_{|V}))$$ where $i_V:V\hookrightarrow X^2\times Y\backslash \gamma$.
\end{lemme}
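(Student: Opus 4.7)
The plan is to realize $Z$ as the zero locus of a natural section $\sigma$ of $F|_V$ built from the defining equation $P_d$ of $X$, and then invoke the usual top Chern class formula.

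First, I would construct $\sigma$. The pullback $q_W^*P_d$ is a section of $q_W^*\mathcal{O}_{\mathbb{P}^{n+r}}(d)$ on $\mathbb{P}(t^*\mathcal{E}_2)$. On the preimage $p_W^{-1}(V)$ this section vanishes along both $D_{W,1}$ and $D_{W,2}$: indeed, over $(x_1,x_2,y)\in V$, these divisors cut out the points $x_1$ and $x_2$ on the fiber line $\ell=\ell_{x_1,x_2,y}$, and both lie on $X=\{P_d=0\}\cap Y$. Dividing by $s_{W,1}\otimes s_{W,2}$ therefore yields a section of $q_W^*\mathcal{O}(d)\otimes \mathcal{O}(-D_{W,1}-D_{W,2})$ over $p_W^{-1}(V)$, whose pushforward along $p_W$ gives a section $\sigma'\in H^0(V, F'|_V)$. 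Composing with the quotient $F'\twoheadrightarrow F$ produces the section $\sigma\in H^0(V, F|_V)$.

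Next, I would identify the vanishing locus of $\sigma$. Over $(x_1,x_2,y)\in V_0$, the inclusion $F''\hookrightarrow F'$ becomes on fibers the inclusion $L_{x_1}^{n-r+1}L_{x_2}\cdot H^0(\ell,\mathcal{O}_\ell(d-n+r-2)) \hookrightarrow L_{x_1}L_{x_2}\cdot H^0(\ell,\mathcal{O}_\ell(d-2))$ (via multiplication by $s_{W,1}^{\otimes(n-r)}$ on the level of line bundles), so $\sigma$ vanishes at $(x_1,x_2,y)$ exactly when $P_d|_\ell$ lies in the subspace $L_{x_1}^{n-r+1}L_{x_2}\cdot H^0(\mathcal{O}_\ell(d-n+r-2))$---the very condition defining $Z_0$ in (\ref{def_Z_0}). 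Hence $Z(\sigma)\cap V_0 = Z_0$ scheme-theoretically, and by closure $Z\subset Z(\sigma)$ inside $V$.

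Finally, $F|_V$ has rank $n-r$ and $\dim V = 2n-r-1$, so the expected dimension of $Z(\sigma)$ is $n-1 = \dim Z$. Granted the regularity of $\sigma$, the standard formula gives $[Z(\sigma)] = c_{n-r}(F|_V)\cap [V]$ in $\mathrm{CH}_{n-1}(V)$, and pushing forward along $i_V$ yields the claimed equality. The main obstacle is precisely this regularity step: one must verify that $Z(\sigma)$ has pure expected codimension $n-r$ in $V$ and that no extraneous component appears, in particular along the boundary $V\setminus V_0 = A_{12}\cup B_{13}\cup B_{23}$. I would handle this by a Bertini-type argument in the spirit of the one used in Lemma \ref{lem_V_0_non_empty}, exploiting that $F$ is globally generated (as a quotient of $F'$, itself the pushforward of a tensor product involving the very ample line bundle $q_W^*\mathcal{O}_{\mathbb{P}^{n+r}}(d)$), so that for the general $X\in|\mathcal{O}_Y(d)|$ the induced section $\sigma$ meets the zero section of $F|_V$ in a subscheme of the expected dimension.
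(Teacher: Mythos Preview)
Your proposal is correct and follows essentially the same approach as the paper: build a section of $F|_V$ from the defining polynomial $P_d$, identify its zero locus with $Z$, and invoke the top Chern class formula together with global generation of $F$ to guarantee regularity for general $X\in|\mathcal O_Y(d)|$. The paper's proof is terser but structurally identical; your extra care about possible extraneous components along $V\setminus V_0$ is exactly the point the paper absorbs into the phrase ``for $X$ general'' via the same Bertini-type argument you describe.
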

\begin{proof} The equation defining $X\subset Y$ gives rise to a section $\eta_{X,Y}\in H^0(F'_{|V})$, thus to a section $\overline{\eta_{X,Y}}\in H^0(F_{|V})$. As $F$ is globally generated, a general section is regular so for $X$ general, $Z$ which is, by construction, the zero locus of $\overline{\eta_{X,Y}}$ is $n-1$-dimensional and its class is the top Chern class of $F_{|V}$.
\end{proof}

Using the refined Gysin morphisms (see \cite{fulton}), we get the following relation
\begin{proposition}\label{prop_relation_without_diag} There is a homogeneous polynomial $P\in \mathbb Q[T_1,T_2,T_3]$ of degree $2n-1$ such that $$\begin{aligned} Z+j_{12,*}(c_{n-r}(F_{|\Delta_{12}})c_r(Exc(\Delta_{12}\backslash V)))+j_{13,*}(c_{n-r}(F_{|\Gamma_{13}})c_{r-1}(Exc(\Gamma_{13}\backslash V))) \\+j_{23,*}(c_{n-r}(F_{|\Gamma_{23}})c_{r-1}(Exc(\Gamma_{23}\backslash V))) + P(h_{X,1},h_{X,2},h_{Y})=0\end{aligned}$$ in ${\rm CH}_{n-1}(X^2\times Y\backslash \gamma)$, where $h_{X,i}=pr_i^{X^2\times Y,*}(H_{|X})$, $h_Y=pr_3^{X^2\times Y,*}(H_{|Y})$ and $$j_{12}:X\times Y\backslash \Gamma\hookrightarrow X^2\times Y\backslash \gamma,\ (x,y)\mapsto (x,x,y)$$
$$j_{13}:X\times X\backslash \Delta_X\hookrightarrow X^2\times Y\backslash \gamma,\ (x,x')\mapsto (x,x',x)$$
$$j_{23}:X\times X\backslash \Delta_X\hookrightarrow X^2\times Y\backslash \gamma,\ (x,x')\mapsto (x,x',x')$$
\end{proposition}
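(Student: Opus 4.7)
The plan is to derive the identity as the pullback to $B := X^2\times Y\setminus \gamma$ of a polynomial-in-hyperplane-classes identity for the pushforward to $A := (\mathbb P^{n+r})^3\setminus \delta_{\mathbb P^{n+r}}$ of the class $c_{n-r}(F)\cap [W]$, via Fulton's excess intersection formula applied to the decomposition of $W\cap B$ obtained in Proposition \ref{prop_intersect_V_diagonals_excess_bundles}.

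First I would form $\alpha := c_{n-r}(F)\cap [W]\in \mathrm{CH}_{n+3r+1}(W)$ and push it forward via $i : W\hookrightarrow A$. The localisation sequence for the small diagonal $\delta_{\mathbb P^{n+r}}\subset (\mathbb P^{n+r})^3$, of dimension $n+r$, shows that $\mathrm{CH}_k(A) = \mathrm{CH}_k((\mathbb P^{n+r})^3)$ for $k > n+r$; as $n+3r+1 > n+r$, the class $i_*\alpha$ is represented by some $\tilde{P}(H_1, H_2, H_3)$ with $\tilde{P}\in\mathbb Q[T_1,T_2,T_3]$ homogeneous of degree $2n-1$, where $H_i$ denotes the pullback of the hyperplane class from the $i$-th factor of $(\mathbb P^{n+r})^3$.

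Next I would apply the refined Gysin morphism $j^!$ associated with the regular embedding $j : B\hookrightarrow A$. By the functoriality of \cite[\S 6]{fulton}, $j^*i_*\alpha = (i_B)_*(j^!\alpha)$ where $i_B : W\cap B\hookrightarrow B$; and by the excess intersection formula applied component by component to the decomposition $W\cap B = V\cup \Delta_{12}\cup \Gamma_{13}\cup \Gamma_{23}$, the class $j^!\alpha$ decomposes as the sum over $C\in\{V,\Delta_{12},\Gamma_{13},\Gamma_{23}\}$ of $c_{e_C}(\mathrm{Exc}(C\setminus V))\cdot c_{n-r}(F|_C)\cap [C]$, with excess ranks $e_C = 0,\, r,\, r-1,\, r-1$ and excess bundles computed in Proposition \ref{prop_intersect_V_diagonals_excess_bundles}. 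The $V$-contribution yields exactly $Z$ by Lemma \ref{lem_Z_0_chern_class}, and the three other contributions push forward under $j_{12}, j_{13}, j_{23}$ to the three remaining terms of the stated identity. On the left-hand side, $j^*\tilde{P}(H_1, H_2, H_3) = \tilde{P}(h_{X,1}, h_{X,2}, h_Y)$, so setting $P := -\tilde{P}$ yields the claim.

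The main technical point will be the careful application of the excess intersection formula in this multi-component intersection, where the four components meet pairwise in positive-dimensional loci (for instance, $V\cap \Delta_{12}$ contains the closure of the tangency locus $A_{12}$ introduced before Lemma \ref{lem_V_0_non_empty}). This will be handled by restricting to the smooth locus of each component and collecting the resulting cycles, since any correction terms are supported on these pairwise intersections, which have strictly lower dimension than the components themselves and are thus invisible in the final dimension count of $n-1$ in $\mathrm{CH}_{n-1}(B)$; alternatively one could pass to a resolution of $W\cap B$ and verify that the component-wise contributions reassemble correctly.
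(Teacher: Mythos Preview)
Your proposal is correct and follows essentially the same approach as the paper: form $c_{n-r}(F)\cap[W]$, push it into $(\mathbb P^{n+r})^3\setminus\delta$ where localization and the Chow ring of projective space give a polynomial in the $H_i$, then use the commutation of the refined Gysin map with pushforward (\cite[Theorem 6.2]{fulton}) together with the excess intersection formula (\cite[Proposition 6.3]{fulton}) on the component decomposition from Proposition \ref{prop_intersect_V_diagonals_excess_bundles}. The paper handles the multi-component overlap exactly as you suggest, via the dimension argument that $V\cap\Delta_{12}$ and $V\cap\Gamma_{i3}$ have strictly smaller dimension.
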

\begin{proof} The proof is readily adapted from \cite[Proposition 1.7]{fu_decomp}. Consider the following fiber square $$\xymatrix{V\cup\Delta_{12}\cup\Gamma_{13}\cup\Gamma_{23}\ar@{^{(}->}[r]^{i_3}\ar@{^{(}->}[d]^{i_4} &W\ar@{^{(}->}[d]^{i_2}\\
X^2\times Y\backslash\gamma\ar@{^{(}->}[r]_{i_1} &(\mathbb P^{n+r})^3\backslash \delta_{\mathbb P^{n+r}}}$$
According to \cite[Theorem 6.2]{fulton}, we have in ${\rm CH}_{n-1}(X^2\times Y\backslash \gamma)$, 
\begin{equation}\label{equality_gysin_commute} i_{4,*}(i_1^{!}c_{n-r}(F))=i_1^!(i_{2,*}c_{n-r}(F))
\end{equation}
\indent As $i_{2,*}c_{n-r}(F)\in {\rm CH}_{n+3r+1}((\mathbb P^{n+r})^3\backslash \delta_{\mathbb P^{n+r}})\simeq {\rm CH}_{n+3r+1}((\mathbb P^{n+r})^3)$ 
(the latter isomorphism is by localization exact sequence as $dim(\delta_{\mathbb P^{n+r}})=n+r<n+3r+1$) and the Chow ring 
of $(\mathbb P^{n+r})^3$ is generated by $H_i:=pr_i^*H$, $i=1,2,3$, there is a homogeneous polynomial $P$ of degree $3(n+r)-(n+3r+1)=2n-1$ 
such that $i_{2,*}c_{n-r}(F)=-P(H_1,H_2,H_3)$ in ${\rm CH}_{n+3r+1}((\mathbb P^{n+r})^3\backslash \delta_{\mathbb P^{n+r}})$. 
As a result, (\ref{equality_gysin_commute}) becomes $i_{4,*}(i_1^!c_{n-r}(F))+P(h_{X,1},h_{X,2},h_Y)=0$ in ${\rm CH}_{n-1}(X^2\times Y\backslash \gamma)$.\\
\indent Now,according to \cite[Proposition 6.3]{fulton}, we have in ${\rm CH}_{n-1}(W\cap (X^2\times Y\backslash \gamma)$: $$i_1^!c_{n-r}(F)=i_1^!(c_{n-r}(F)\cdot [W])=c_{n-r}(F_{|V\cup \Delta_{12}\cup \Gamma_{13}\cup \Gamma_{23}})i_1^!([W])$$ where $[W]$ denotes the fundamental class of $W$. As $V\cap \Delta_{12}=A_{12}$, $V\cap \Gamma_{i3}=B_{i3}$ ($i=1,2$) have dimension strictly less than $dim(V)$, we get $$\begin{aligned}i_1^!([W])&=[V]+[\Delta_{12}]\cdot c_r(Exc(\Delta_{12}\backslash V)) + [\Gamma_{13}]\cdot c_{r-1}(Exc(\Gamma_{13}\backslash V)) + [\Gamma_{23}]\cdot c_{r-1}(Exc(\Gamma_{23}\backslash V))\end{aligned}$$ and $$c_{n-r}(F_{|V})i_1^![W]=c_{n-r}(F_{|V}),$$ $$c_{n-r}(F_{|\Delta_{12}})\cdot i_1^![W]=j_{12,*}(c_{n-r}(F_{|\Delta_{12}})\cdot c_r(Exc(\Delta_{12}\backslash V))),$$ and $$c_{n-r}(F_{|\Gamma_{i3}})\cdot i_1^![W]=j_{i3,*}(c_{n-r}(F_{|\Gamma_{i3}})\cdot c_{r-1}(Exc(\Gamma_{i3}\backslash V))).$$
\end{proof}

Let us calculate more precisely each term of the relation of the Proposition.

\begin{lemme}\label{lem_restrictions_F} (1) Under the isomorphism $\Delta_{12}\simeq X\times Y\backslash \Gamma$, we have $$F_{|\Delta_{12}}\simeq \oplus_{m=d-(n-r+1)}^{d-2}pr_X^*\mathcal O_X(m)\otimes pr_Y^*\mathcal O_Y(d-m)$$ and $$c_{n-r}(F_{|\Delta_{12}})=\Pi_{m= d-(n-r+1)}^{d-2}(m h_X+ (d-m)h_Y).$$

\indent (2) Under the isomorphism $\Gamma_{13}\simeq X\times X\backslash \Delta_X$, we have $$F_{|\Gamma_{13}}\simeq \oplus_{m=d-(n-r)}^{d-1}pr_1^*\mathcal O_X(m)\otimes pr_2^*\mathcal O_X(d-m)$$ and $$c_{n-r}(F_{|\Gamma_{13}})=\Pi_{m=d-(n-r)}^{d-1}(m pr_1^*h_X+(d-m)pr_2^*h_X).$$  
\end{lemme}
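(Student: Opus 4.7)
The plan is to globalize the elementary fact that a degree-$d$ form on $\mathbb P^1$ equipped with two distinct marked points $p_1, p_2$ decomposes canonically as $\sum_{a+b=d} c_{a,b}\,t^a u^b$, where $t$ (resp.\ $u$) is a linear form vanishing at $p_2$ (resp.\ $p_1$). Over $\Delta_{12} \simeq X \times Y \setminus \Gamma$, the bundle $t^*\mathcal E_2$ carries two sections, $\sigma_{W,1}=\sigma_{W,2}$ giving $x = x_1 = x_2$ and $\sigma_{W,3}$ giving $y = x_3$, and they are disjoint precisely because $\Gamma$ has been removed. Two disjoint sections of a $\mathbb P^1$-bundle $\mathbb P(V)$ correspond to two line-bundle quotients $V \to L_1$ and $V \to L_2$ whose kernels are complementary, hence to a splitting $V \simeq L_1 \oplus L_2$ in which $L_i$ is identified with the kernel of the other quotient. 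Applying this to $\sigma_{W,1}$ and $\sigma_{W,3}$, and noting that the pullback of $\mathcal O_{\mathbb P^{n+r}}(1)$ via $q_W \circ \sigma_{W,1}$ (resp.\ via $q_W \circ \sigma_{W,3}$) restricts on $\Delta_{12}$ to $pr_X^*\mathcal O_X(1)$ (resp.\ $pr_Y^*\mathcal O_Y(1)$), yields $t^*\mathcal E_2|_{\Delta_{12}} \simeq pr_X^*\mathcal O_X(1) \oplus pr_Y^*\mathcal O_Y(1)$, with the sub-line-bundle $pr_Y^*\mathcal O_Y(1) \hookrightarrow t^*\mathcal E_2|_{\Delta_{12}}$ parametrising the linear forms on $\ell_{x,y}$ that vanish at $x$.

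Since $p_{W,*} q_W^*\mathcal O_{\mathbb P^{n+r}}(d) \simeq {\rm Sym}^d t^*\mathcal E_2$, this splitting produces a canonical decomposition
$$p_{W,*} q_W^*\mathcal O_{\mathbb P^{n+r}}(d)|_{\Delta_{12}} \simeq \bigoplus_{a+b=d} pr_X^*\mathcal O_X(a) \otimes pr_Y^*\mathcal O_Y(b).$$
On each fiber of $p_W$ over $\Delta_{12}$, the divisors $D_{W,1}$ and $D_{W,2}$ coincide set-theoretically at $x$ but remain distinct as Cartier divisors on $\mathbb P(t^*\mathcal E_2)$, so $D_{W,1}+D_{W,2}$ restricts to the length-$2$ subscheme $2x$ and $(n-r+1)D_{W,1}+D_{W,2}$ to $(n-r+2)x$. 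Therefore $F'|_{\Delta_{12}}$ and $F''|_{\Delta_{12}}$ are the subsheaves of sections vanishing to order $\geq 2$, respectively $\geq n-r+2$, at $x$. A section in the summand $pr_X^*\mathcal O_X(a)\otimes pr_Y^*\mathcal O_Y(b)$ is fibrewise proportional to $t^a u^b$ and thus vanishes to order exactly $b$ at $x$, so these subsheaves are the partial direct sums taken over $b \geq 2$, respectively $b \geq n-r+2$. Taking the quotient and reindexing by $m = d-b$ gives the decomposition of $F|_{\Delta_{12}}$ in (1), and the Chern class formula follows at once from the multiplicativity of the total Chern class on a direct sum of line bundles.

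Part (2) is strictly analogous on $\Gamma_{13} \simeq X\times X \setminus \Delta_X$: the two disjoint sections $\sigma_{W,1}=\sigma_{W,3}$ and $\sigma_{W,2}$ give $t^*\mathcal E_2|_{\Gamma_{13}} \simeq pr_1^*\mathcal O_X(1) \oplus pr_2^*\mathcal O_X(1)$, and $D_{W,1}, D_{W,2}$ now restrict to two distinct reduced points $x_1, x_2$ on each fiber. Hence $F'|_{\Gamma_{13}}$ and $F''|_{\Gamma_{13}}$ are the subsheaves of sections vanishing simply at both $x_1, x_2$, respectively to order $\geq n-r+1$ at $x_1$ and $\geq 1$ at $x_2$, and the bidegree condition $1 \leq b \leq n-r$ gives the claimed decomposition (with $m = d-b$) and Chern class formula. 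The only real subtlety is the non-reduced behaviour of $D_{W,1}+D_{W,2}$ on fibers over $\Delta_{12}$, where the two sections collapse; once that is correctly accounted for, everything else is linear algebra on $\mathbb P^1$ with two distinct marked points.
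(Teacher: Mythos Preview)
Your argument is correct and follows the same route as the paper: split $t^*\mathcal E_2$ over the locus in question via the two disjoint sections, decompose ${\rm Sym}^d(t^*\mathcal E_2)$ accordingly, and identify $F'$ and $F''$ as the partial sums corresponding to the appropriate orders of vanishing along the section at $x$ (respectively at $x_1,x_2$). The paper carries this out by writing down the divisor classes of $D_1,D_2$ explicitly and pushing forward, while you phrase the same computation in terms of fibrewise orders of vanishing; the content is identical. One small wording issue: over $\Delta_{12}$ the restrictions of $D_{W,1}$ and $D_{W,2}$ are literally the \emph{same} Cartier divisor $D_1$ on $\mathbb P(t^*\mathcal E_2)|_{\Delta_{12}}$, not merely set-theoretically equal---what matters, and what you use correctly, is that the line bundle $\mathcal O(-D_{W,1}-D_{W,2})$ restricts to $\mathcal O(-2D_1)$, so that $F'$ picks out sections vanishing to order $\geq 2$ at $x$.
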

\begin{proof} (1) We have the inclusions $$\xymatrix{\mathbb P(t^*\mathcal E_{2|X\times Y\backslash \Gamma})\ar[d]\ar@{^{(}->}[r] &\mathbb P(t^*\mathcal E_{2|(\mathbb P^{n+r})^2\backslash \Delta_{\mathbb P^{n+r}}})\ar@{^{(}->}[r]\ar[d]^{p_2} &\mathbb P(t^*\mathcal E_2)\ar[d]_{p_W}\\
X\times Y\backslash \Gamma\ar@{^{(}->}[r] &(\mathbb P^{n+r})^2\backslash \Delta_{\mathbb P^{n+r}}\ar@{^{(}->}[r] &W}$$ which shows that $F_{|\Delta_{12}}'$, $F_{|\Delta_{12}}''$, $F_{|\Delta_{12}}$ are pulled-back from $(\mathbb P^{n+r})^2\backslash \Delta_{\mathbb P^{n+r}}$. The three sections $D_{W,i}$, $i=1,2,3$ of $p_W$ restrict to two sections $p_2$, namely $D_1:(x,y)\mapsto ([\ell_{x,y}],x)$ (restriction of $D_{W,1}$ and $D_{W,2}$) and $D_2:(x,y)\mapsto ([\ell_{x,y}],y)$ (restriction of $D_{W,3}$).\\
\indent Evaluation maps give an isomorphism $t^*\mathcal E_{2|(\mathbb P^{n+r})^2\backslash \Delta_{\mathbb P^{n+r}}}\simeq pr_1^*\mathcal O_{\mathbb P^{n+r}}(1)\oplus pr_2^*\mathcal O_{\mathbb P^{n+r}}(1)$. The section $D_1$ is given by the surjection $t^*\mathcal E_{2|(\mathbb P^{n+r})^2\backslash \Delta_{\mathbb P^{n+r}}}\twoheadrightarrow pr_1^*\mathcal O_{\mathbb P^{n+r}}(1)$ so that $D_1\in |p_2^*pr_2^*\mathcal O_{\mathbb P^{n+r}}(-1)\otimes q_2^*\mathcal O_{\mathbb P^{n+r}}(1)|$ where $q_2:\mathbb P(\mathcal E_{2|(\mathbb P^{n+r})^2\backslash \Delta_{\mathbb P^{n+r}}})\rightarrow \mathbb P^{n+r}$.\\
\indent Likewise $D_2\in |p_2^*pr_1^*\mathcal O_{\mathbb P^{n+r}}(-1)\otimes q_2^*\mathcal O_{\mathbb P^{n+r}}(1)|$.\\
\indent So we have $$\begin{aligned}F'_{|(\mathbb P^{n+r})^2\backslash \Delta_{\mathbb P^{n+r}}}&\simeq p_{2,*}(q_2^*\mathcal O_{\mathbb P^{n+r}}(d)\otimes \mathcal O_{\mathbb P(\mathcal E_{2|(\mathbb P^{n+r})^2\backslash \Delta_{\mathbb P^{n+r}}})}(-2D_1))\\
&\simeq p_{2,*}(q_2^*\mathcal O_{\mathbb P^{n+r}}(d-2)\otimes p_2^*pr_2^*\mathcal O_{\mathbb P^{n+r}}(2))\\
&\simeq {\rm Sym}^{d-2}t^*\mathcal E_{2|(\mathbb P^{n+r})^2\backslash \Delta_{\mathbb P^{n+r}}}\otimes pr_2^*\mathcal O_{\mathbb P^{n+r}}(2)\\
&\simeq \oplus_{m=0}^{d-2}pr_1^*\mathcal O_{\mathbb P^{n+r}}(m)\otimes pr_2^*\mathcal O_{\mathbb P^{n+r}}(d-2-m+2)\end{aligned}$$ and $$\begin{aligned}F''_{|(\mathbb P^{n+r})^2\backslash \Delta_{\mathbb P^{n+r}}}&\simeq p_{2,*}(q_2^*\mathcal O_{\mathbb P^{n+r}}(d)\otimes \mathcal O_{\mathbb P(\mathcal E_{2|(\mathbb P^{n+r})^2\backslash \Delta_{\mathbb P^{n+r}}})}(-(n-r+2)D_1)\\
&\simeq {\rm Sym}^{d-(n-r+2)}t^*\mathcal E_{2|(\mathbb P^{n+r})^2\backslash \Delta_{\mathbb P^{n+r}}}\otimes pr_2^*\mathcal O_{\mathbb P^{n+r}}(n-r+2)\\
&\simeq \oplus_{m=0}^{d-(n-r+2)}pr_1^*\mathcal O_{\mathbb P^{n+r}}(m)\otimes pr_2^*\mathcal O_{\mathbb P^{n+r}}(d-m).\end{aligned}$$
As a consequence, $F_{|(\mathbb P^{n+r})^2\backslash \Delta_{\mathbb P^{n+r}}}\simeq \oplus_{m=d-(n-r+1)}^{d-2}pr_1^*\mathcal O_{\mathbb P^{n+r}}(m)\otimes pr_2^*\mathcal O_{\mathbb P^{n+r}}(d-m)$.\\

\indent (2) Consider the isomorphism $\Gamma_{13}\simeq X\times X\backslash \Delta_X$, $(x,x',x)\mapsto (x,x')$ and the inclusions $$\xymatrix{\mathbb P(t^*\mathcal E_{2|X^2\backslash \Delta_X})\ar[d]\ar@{^{(}->}[r] &\mathbb P(t^*\mathcal E_{2|(\mathbb P^{n+r})^2\backslash \Delta_{\mathbb P^{n+r}}})\ar@{^{(}->}[r]\ar[d]^{p_2} &\mathbb P(t^*\mathcal E_2)\ar[d]_{p_W}\\
X\times X\backslash \Delta_X\ar@{^{(}->}[r] &(\mathbb P^{n+r})^2\backslash \Delta_{\mathbb P^{n+r}}\ar@{^{(}->}[r] &W}.$$
The three sections $D_{W,i}$, $i=1,2,3$ of $p_W$ restrict to two sections of $p_2$, namely $D_1:(x,x')\mapsto ([\ell_{x,x'}],x)$ (restriction of $D_{W,1}$ and $D_{W,3}$) and $D_2:(x,x')\mapsto ([\ell_{x,x'}],x')$ (restriction of $D_{W,2}$).\\
\indent Evaluation maps give the isomorphism $t^*\mathcal E_{2|(\mathbb P^{n+r})^2\backslash \Delta_{\mathbb P^{n+r}}}\simeq pr_1^*\mathcal O_{\mathbb P^{n+r}}(1)\oplus pr_2^*\mathcal O_{\mathbb P^{n+r}}(1)$. We get, as above, that $D_1\in |p_2^*pr_2^*\mathcal O_{\mathbb P^{n+r}}(-1)\otimes q_2^*\mathcal O_{\mathbb P^{n+r}}(1)|$ and $D_2\in |p_2^*pr_1^*\mathcal O_{\mathbb P^{n+r}}(-1)\otimes q_2^*\mathcal O_{\mathbb P^{n+r}}(1)|$.\\
\indent Hence $$\begin{aligned}F'_{|(\mathbb P^{n+r})^2\backslash \Delta_{\mathbb P^{n+r}}}&\simeq p_{2,*}(q_2^*\mathcal O_{\mathbb P^{n+r}}(d)\otimes \mathcal O_{\mathbb P(\mathcal E_{2|(\mathbb P^{n+r})^2\backslash \Delta_{\mathbb P^{n+r}}})}(-D_1-D_2)\\
&\simeq {\rm Sym}^{d-2}t^*\mathcal E_{2|(\mathbb P^{n+r})^2\backslash \Delta_{\mathbb P^{n+r}}}\otimes pr_1^*\mathcal O_{\mathbb P^{n+r}}(1)\otimes pr_2^*\mathcal O_{\mathbb P^{n+r}}(1)\\
&\simeq \oplus_{k=1}^{d-1}pr_1^*\mathcal O_{\mathbb P^{n+r}}(k)\otimes pr_2^*\mathcal O_{\mathbb P^{n+r}}(d-k)\end{aligned}$$ and $$\begin{aligned}F''_{|(\mathbb P^{n+r})^2\backslash \Delta_{\mathbb P^{n+r}}}&\simeq p_{2,*}(q_2^*\mathcal O_{\mathbb P^{n+r}}(d)\otimes \mathcal O_{\mathbb P(\mathcal E_{2|(\mathbb P^{n+r})^2\backslash \Delta_{\mathbb P^{n+r}}})}(-(n-r+1)D_1-D_2)\\
&\simeq {\rm Sym}^{d-(n-r+2)}t^*\mathcal E_{2|(\mathbb P^{n+r})^2\backslash \Delta_{\mathbb P^{n+r}}}\otimes pr_1^*\mathcal O_{\mathbb P^{n+r}}(1)\otimes pr_2^*\mathcal O_{\mathbb P^{n+r}}(n-r+1)\\
&\simeq \oplus_{k=1}^{d-(n-r+1)}pr_1^*\mathcal O_{\mathbb P^{n+r}}(k)\otimes pr_2^*\mathcal O_{\mathbb P^{n+r}}(d-k).\end{aligned}$$
As a result $F_{|(\mathbb P^{n+r})^2\backslash \Delta_{\mathbb P^{n+r}}}\simeq \oplus_{k=d-(n-r)}^{d-1}pr_1^*\mathcal O_{\mathbb P^{n+r}}(k)\otimes pr_2^*\mathcal O_{\mathbb P^{n+r}}(d-k)$  
\end{proof}

Putting everything together, we get
\begin{lemme}\label{lem_computing_Q} (1) There is a homogeneous polynomial $Q_1\in \mathbb Q(T_1,T_2)$ of degree $n$ such that $Q_1(h_X,h_Y)=c_{n-r}(F_{|\Delta_{12}})\cdot c_r(\frac{pr_X^*N_{X/\mathbb P^{n+r}}}{pr_Y^*\mathcal O_Y(-1)\otimes pr_X^*\mathcal O_X(1)})$. Writing $Q(h_X,h_Y)=\sum_{i=0}^na_{i,Q_1}h_X^ih_Y^{n-i}$ we have $a_{0,Q_1}=(n-r+1)!$.\\
\indent (2) There is a homogeneous polynomial $Q_2\in \mathbb Q[T_1,T_2]$ of degree $n-1$ such that $Q_2(h_{X,1},h_{X,2})=c_{n-r}(F_{|\Gamma_{13}})\cdot c_{r-1}(Exc(\Gamma_{13}\backslash V)$. Writing $Q_2(h_{X,1},h_{X,2})=\sum_{i=0}^{n-1}a_{i,Q_2}h_{X,1}^ih_{X,2}^{n-1-i}$, we have $a_{0,Q_2}=(n-r)!$.\\
\indent There is a homogeneous degree $n-1$ polynomial $Q_3=\sum_{i=0}^{n-1}a_{i,Q_3}T_1^iT_2^{n-1-i}\in \mathbb Q[T_1,T_2]$ such that $Q_3(h_{X,1},h_{X,2})=c_{n-r}(F_{|\Gamma_{23}})\cdot c_{r-1}(Exc(\Gamma_{23}\backslash V)$.
\end{lemme}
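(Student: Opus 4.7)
The plan is to make every Chern class appearing in the statement explicit as a polynomial in $h_X,h_Y$ (respectively $h_{X,1},h_{X,2}$), after which existence and degree of each $Q_i$ follow automatically. By Lemma \ref{lem_restrictions_F}, $c_{n-r}(F_{|\Delta_{12}})$ and the $c_{n-r}(F_{|\Gamma_{i3}})$'s are products of $n-r$ linear forms in the hyperplane classes; by Proposition \ref{prop_intersect_V_diagonals_excess_bundles}, the excess normal bundles sit in short exact sequences with computable total Chern classes, of top degree $r$ or $r-1$. The total degrees then match the claim: $n$ for $Q_1$ and $n-1$ for $Q_2,Q_3$.

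For part (1), I will extract $a_{0,Q_1}$ by setting $h_X=0$. From Lemma \ref{lem_restrictions_F}(1),
\[
c_{n-r}(F_{|\Delta_{12}})\big|_{h_X=0}=\prod_{m=d-(n-r+1)}^{d-2}(d-m)\,h_Y^{n-r}=(n-r+1)!\,h_Y^{n-r},
\]
since $d-m$ runs through the integers $2,\ldots,n-r+1$. For the excess bundle, the nested complete intersection structure yields the splitting $N_{X/\mathbb P^{n+r}}\simeq \mathcal O_X(d)\oplus (\oplus_{i=1}^r\mathcal O_X(d_i))$, so Proposition \ref{prop_intersect_V_diagonals_excess_bundles}(2) gives
\[
c(Exc(\Delta_{12}\backslash V))=\frac{(1+d\,h_X)\prod_{i=1}^r(1+d_i\,h_X)}{1+h_X-h_Y}.
\]
At $h_X=0$ this collapses to $1/(1-h_Y)$, whose degree-$r$ piece is $h_Y^r$, so the product yields $(n-r+1)!\,h_Y^n$ and hence $a_{0,Q_1}=(n-r+1)!$.

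Part (2) proceeds identically with $h_{X,1}=0$: Lemma \ref{lem_restrictions_F}(2) produces $(n-r)!\,h_{X,2}^{n-r}$ (since $d-m$ now runs through $1,\ldots,n-r$), and using $N_{Y/\mathbb P^{n+r}|X}\simeq \oplus_{i=1}^r\mathcal O_X(d_i)$ the excess Chern polynomial $\prod_i(1+d_i\,h_{X,1})/(1+h_{X,1}-h_{X,2})$ again collapses at $h_{X,1}=0$ to $1/(1-h_{X,2})$, contributing $h_{X,2}^{r-1}$ in degree $r-1$. Multiplying gives $(n-r)!\,h_{X,2}^{n-1}$, hence $a_{0,Q_2}=(n-r)!$. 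For part (3), the same calculation with the roles of $pr_1$ and $pr_2$ exchanged in the restriction of $F$ and in the excess bundle produces $Q_3$ of the stated form.

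All the bundles having been identified in the preceding lemma and proposition, the argument is largely formal; the only point meriting care is verifying that evaluating the excess Chern polynomial at $h_X=0$ (resp.\ $h_{X,1}=0$) indeed kills the numerator contribution from the normal bundle, so that only the denominator's geometric series survives to produce the clean powers $h_Y^r$ and $h_{X,2}^{r-1}$ that pin down the leading coefficients.
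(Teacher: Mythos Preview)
Your proof is correct and follows essentially the same route as the paper: both compute $c(Exc(\Delta_{12}\backslash V))=\dfrac{(1+dh_X)\prod_i(1+d_ih_X)}{1-(h_Y-h_X)}$ (and analogously for $\Gamma_{i3}$) and then extract the pure $h_Y^n$ (resp.\ $h_{X,2}^{n-1}$) coefficient. Your device of ``setting $h_X=0$'' is just a slightly slicker phrasing of the paper's explicit coefficient extraction, and the numerical outputs $(n-r+1)!$ and $(n-r)!$ arise identically.
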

\begin{proof} (1) According to Proposition \ref{prop_intersect_V_diagonals_excess_bundles} (2), 
$$\begin{aligned}c(Exc(\Delta_{12}\backslash V))&=\frac{(1+dh_X)\Pi_{m=1}^r(1+d_mh_X)}{1-(h_Y-h_X)}\\
&=(\sum_{m=0}^{r+1}b_mh_X^m)\cdot \sum_{k\geq 0}(h_Y-h_X)^k\ {\rm for\ some}\ b_m\\
& \ \ \ \ \ {\rm determined\ by\ the}\ d_i{\rm 's\ and}\ d\\
&=\sum_{k\geq 0}\sum_{m=0}^{r+1}\sum_{p=0}^k \binom k p (-1)^pb_mh_X^{p+m}h_Y^{k-p}\end{aligned}$$
so that $c_r(Exc(\Delta_{12}\backslash V))=\sum_{m=0}^{r+1}\sum_{k=0}^{r-m}\binom{r-m}{k}(-1)^k b_mh_X^{k+m}h_Y^{r-m-k}$.\\
\indent The coefficient of $h_Y^n$ in $Q_1(h_X,h_Y)$ is the product of ${\rm coeff}(h_Y^{n-r},c_{n-r}(F_{|\Delta_{12}})=\Pi_{m=d-(n-r+1)}^{d-2}(d-m)=(n-r+1)!$ and ${\rm coeff}(h_Y^r,c_r(Exc(\Delta_{12}\backslash V)))=\binom{r}{0}b_0=1$.\\

\indent (2) According to Proposition \ref{prop_intersect_V_diagonals_excess_bundles} (3), 
$$\begin{aligned}c(Exc(\Gamma_{13}\backslash V))&= \frac{\Pi_{i=1}^r(1+d_ih_{X,1})}{1-(h_{X,2}-h_{X,1})}\\
&= (\sum_{m=0}^{r}b'_mh_{X_1}^m)\cdot \sum_{k\geq 0}(h_{X_2}-h_{X,1})^k \ {\rm for\ some}\ b'_m\\
& \ \ \ \ \ {\rm determined\ by\ the}\ d_i{\rm 's\ and}\ d\\
&=\sum_{k\geq 0}\sum_{m=0}^r\sum_{p=0}^k\binom{k}{p} (-1)^pb'_mX_{X,1}^{p+m}h_{X,2}^{k-p}\end{aligned}$$
so that $c_{r-1}(Exc(\Gamma_{13}\backslash V))=\sum_{m=0}^r\sum_{k=0}^{r-1-m}\binom{r-1-m}{k}(-1)^k b'_m h_{X,1}^{k+m}h_{X,2}^{r-1-m-k}$.\\
\indent The coefficient of $h_{X,2}^{n-1}$ in $Q_2(h_{X,1},h_{X,2})$ is the product of ${\rm coeff}(h_{X,2}^{n-r},c_{n-r}(F_{|\Gamma_{13}}))=\Pi_{m=d-(n-r)}^{d-1}(d-m)=(n-r)!$ and ${\rm coeff}(h_{X,2}^{r-1},c_{r-1}(Exc(\Gamma_{13}\backslash V)))=\binom{r-1}{0}b'_0=1$.

\end{proof}

\begin{theoreme}\label{thm_identity_X_X_Y} We have the following equality in ${\rm CH}_{n-1}(X^2\times Y)$:
$$\begin{aligned}
((n-r+1)!+(n-r)!)\gamma = Z + j_{12,*}(Q_1(h_X,h_Y))+j_{13,*}(Q_2(h_{X,1},h_{X,2}))
+j_{23,*}(Q_3(h_{X,1},h_{X,2})) \\ 
+ P(h_{X,1},h_{X,2},h_Y).
\end{aligned}$$
\end{theoreme}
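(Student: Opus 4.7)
The plan is to lift the identity of Proposition~\ref{prop_relation_without_diag} from ${\rm CH}_{n-1}(X^2\times Y\backslash \gamma)$ to ${\rm CH}_{n-1}(X^2\times Y)$ via the localization exact sequence, and then to compute the resulting coefficient of $\gamma$ by evaluating both sides as correspondences on a carefully chosen zero-cycle.

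Since $\gamma\simeq X$ is irreducible of dimension $n-1$, one has ${\rm CH}_{n-1}(\gamma)=\mathbb{Q}\cdot\gamma$, and the localization exact sequence
\[
\mathbb{Q}\cdot\gamma \longrightarrow {\rm CH}_{n-1}(X^2\times Y) \longrightarrow {\rm CH}_{n-1}(X^2\times Y\backslash \gamma) \longrightarrow 0,
\]
combined with Proposition~\ref{prop_relation_without_diag} and Lemma~\ref{lem_computing_Q}, produces, after extending the open embeddings $j_{ij}$ to closed embeddings of $X\times Y$ (resp.\ $X^2$) into $X^2\times Y$ and replacing $Z$ by its closure in $X^2\times Y$, an identity
\[
Z + j_{12,*}(Q_1) + j_{13,*}(Q_2) + j_{23,*}(Q_3) + P = \mu\,\gamma
\]
in ${\rm CH}_{n-1}(X^2\times Y)$ for some $\mu\in\mathbb{Q}$. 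It therefore suffices to show $\mu=(n-r+1)!+(n-r)!$.

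To compute $\mu$, I will evaluate both sides as correspondences on a nonzero class $z\in {\rm CH}_0(X)_{hom}$, whose existence is guaranteed by the Mumford--Roitman theorem since $d\geq 2(n+r)$. Using the correspondence action $(-)_\ast := pr_{1,*}\bigl((-)\cdot pr_{23}^*(-)\bigr)$ and the identity $\gamma_*(z\times Y)=z$ noted in the introduction, the relation becomes
\[
\mu\, z \;=\; Z_*(z\times Y) + j_{12,*}(Q_1)_*(z\times Y) + j_{13,*}(Q_2)_*(z\times Y) + j_{23,*}(Q_3)_*(z\times Y) + P_*(z\times Y)
\]
in ${\rm CH}_0(X)$. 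For the polynomial correspondences $P$, $j_{13,*}(Q_2)$ and $j_{23,*}(Q_3)$, the projection formula combined with the K\"unneth decomposition reduces each action to intersections of the form $H_{|X}^i\cdot z$ together with $\deg(z)$; since $\dim X=n-1$ forces $H_{|X}^i\cdot z=0$ for $i\geq 1$, and $\deg(z)=0$ by assumption, these three contributions vanish. For $j_{12,*}(Q_1)$, the same dimension argument isolates the pure $h_Y^n$ summand of $Q_1$, yielding a multiple of $z$ whose coefficient is governed by $a_{0,Q_1}=(n-r+1)!$ from Lemma~\ref{lem_computing_Q}(1). A geometric analysis of $Z$ using the contact description (\ref{def_Z_0}) then produces the complementary summand $(n-r)!$, mirroring the calculation of $a_{0,Q_2}$ in Lemma~\ref{lem_computing_Q}(2).

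The hard part will be the geometric computation of $Z_*(z\times Y)$: one must show that intersecting $Z$ with $X\times z\times Y$ and pushing forward to the first factor yields the expected multiple of~$z$, which requires analyzing lines through a generic point of~$z$ having the prescribed contact of order $n-r+1$ with $X$ at a variable point $x_1$. The remaining contributions (the vanishing of $P_*$, $j_{13,*}(Q_2)_*$, $j_{23,*}(Q_3)_*$ and the isolation of the leading coefficient of $Q_1$) fall out of the projection formula and elementary dimension counting on~$X$.
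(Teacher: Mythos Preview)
Your strategy is exactly the paper's: localize to obtain the identity with an undetermined coefficient~$\mu$, then evaluate both sides as correspondences on $z\times Y$ for a homologically trivial $0$-cycle $z\in{\rm CH}_0(X)_{hom}$ produced by Mumford--Roitman. The vanishing of the $P$-, $j_{13}$- and $j_{23}$-terms and the reduction to $\gamma_*(z\times Y)=z$ are handled just as you describe.

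Two points deserve correction. First, the $j_{12}$ contribution is not~$(n-r+1)!$ but $(n-r+1)!\,{\rm deg}(Y)$: isolating the $h_Y^n$-summand of $Q_1$ leaves $a_{0,Q_1}\,z\cdot h_Y^n$, and pushing forward to $X$ integrates $h_Y^n$ over~$Y$, picking up ${\rm deg}(Y)$. Consequently your expectation that $Z_*(z\times Y)$ contributes the ``complementary summand $(n-r)!$'' is off; the paper shows (Lemma~\ref{lem_action_of_Z_on_points}) that $Z_*(x\times Y)=\bigl((n-r+1)!(1-{\rm deg}(Y))+(n-r)!\bigr)x$ modulo $\mathbb Q\cdot h_X^{n-1}$, and the ${\rm deg}(Y)$-terms cancel against the $j_{12}$ piece to yield $\mu=(n-r+1)!+(n-r)!$. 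So the analogy with $a_{0,Q_2}$ is misleading: the $Z$-term does not simply ``mirror'' a Chern-coefficient extraction.

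Second, note that the paper places $z$ in the \emph{first} factor (correspondence $X_1\times Y\to X_2$), whereas you place it in the second. Since $Z$ is asymmetric in $(x_1,x_2)$ (the contact of order $n-r+1$ is imposed at~$x_1$), the geometric content of the computation in your direction is genuinely different from the paper's Lemma~\ref{lem_action_of_Z_on_points}, which blows up $Y$ at the input point $x$ and runs an excess-intersection argument for the locus $S_x$ of lines through~$x$ with high contact at~$x$. Either direction determines~$\mu$, but if you follow the paper's computation you should switch to $pr_{2,*}\bigl((-)\cdot pr_{13}^*(z\times Y)\bigr)$.
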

\begin{proof} Putting Proposition \ref{prop_relation_without_diag}, Lemma \ref{lem_computing_Q} and localization of the exact sequence together, we get, for some $N\in \mathbb Q$, the following equality in ${\rm CH}_{n-1}(X^2\times Y)$
\begin{equation}\label{identity_N_to_compute}
N\gamma = Z + j_{12,*}(Q_1(h_X,h_Y))+j_{13,*}(Q_2(h_{X,1},h_{X,2}))+j_{23,*}(Q_3(h_{X,1},h_{X,2})) + P(h_{X,1},h_{X,2},h_Y)
\end{equation}
By the adjunction formula, $K_X\simeq \mathcal O_X(k)$ with $k>n+r$, so that $h^0(K_X)>0$. By Roitman theorem (\cite{vois_lect}), ${\rm CH}_0(X)$ is thus infinite dimensional. In particular there are $0$-cycle on $X$ that are not multiples of $h_X^{n-1}$. Pick such $0$-cycle $z\in {\rm CH}_0(X)_{hom}$.\\
\indent Looking at the correspondences in (\ref{identity_N_to_compute}) as correspondences from $X_1\times Y$ to $X_2$ ($X_i$ stands a copy of $X$ at the $i$-component), we can let (\ref{identity_N_to_compute}) act on $z\times Y$.\\
\indent We have $\gamma_*(z\times Y)=z\in {\rm CH}_0(X)$.\\
\indent We have in ${\rm CH}_0(X)$, $$\begin{aligned} j_{12,*}(Q_1(h_X,h_Y))_*(z\times Y)&=pr^{X^2\times Y}_{2,*}(j_{12,*}(Q_1(h_X,h_Y))\cdot z\times X\times Y)\\
&=pr_{2,*}^{X^2\times Y}j_{12,*}(Q_1(h_X,h_Y)\cdot j_{12}^*(z\times X\times Y))\\
&=pr_{2,*}^{X^2\times Y}j_{12,*}(Q_1(h_X,h_Y)\cdot z\times Y)\\
&=pr_{2,*}^{X^2\times Y}j_{12,*}(a_{0,Q_1}z\cdot h_Y^n)\\
&=(n-r+1)!deg(Y)z\ {\rm by\ Lemma}\ \ref{lem_computing_Q}.\end{aligned}$$ 
\indent We easily see that $j_{13,*}(Q_2(h_{X,1},h_{X,2}))_*(z\times Y)\in \mathbb Q\cdot h_X^{n-1}$, $j_{23,*}(Q_3(h_{X,1},h_{X,2}))_*(z\times Y)\in \mathbb Q\cdot h_X^{n-1}$ and $P(h_{X,1},h_{X,2},h_Y)_*(z\times Y)\in \mathbb Q\cdot h_X^{n-1}$.\\
\indent We are thus left with the analysis of $Z_*(z\times Y)$.
We have the following 
\begin{lemme}\label{lem_action_of_Z_on_points} Let $x\in X$, then $Z_*(x\times Y)=((n-r+1)!(1-{\rm deg}(Y))+(n-r)!)x\ {\rm mod}\ \mathbb Q\cdot h_X^{n-1}$.
\end{lemme}
Postponing the proof of the Lemma, we conclude as follows: the action of (\ref{identity_N_to_compute}) on $z\times Y$ gives $Nz=((n-r+1)!(1-{\rm deg}(Y))+(n-r)!)z + (n-r+1)!{\rm deg}(Y)z \ {\rm mod}\ h_X^{n-1}$ in ${\rm CH}_0(X)$ i.e. $N=(n-r+1)!+(n-r)!\neq 0$.

\end{proof}

\begin{proof}[Proof of Lemma \ref{lem_action_of_Z_on_points}] We recall that in the classical construction, associating to a pair of distinct points the line they determine 
\begin{equation}\label{diag_PxP_G}\xymatrix{\widetilde{(\mathbb P^{n+r})^2_{\Delta_{\mathbb P^{n+r}}}}\ar[d]_b\ar[dr]^{e}\ar[r]^{\cong} &\mathbb P(\mathcal E_2)\times_G\mathbb P(\mathcal E_2)\ar[d]\\
(\mathbb P^{n+r})^2\ar@{-->}[r]^{\tilde e} &G(2,n+r+1)}
\end{equation} the exceptional divisor $E_{\Delta}$ of $b$ satisfies $E_\Delta=b^*(H_1+H_2)-e^*g$ in ${\rm Pic}(\widetilde{(\mathbb P^{n+r})^2_{\Delta_{\mathbb P^{n+r}}}})$ where $g$ denote the Pl\"ucker line bundle.\\
\indent Restricting the situation to $x\times Y$ and completing the picture, we get $$\xymatrix{\mathbb P(e^*\mathcal E_2)\ar[r]^{q_Y}\ar[d]_{p_Y} &\mathbb P^{n+r}\\
\widetilde{Y_x}\ar[dr]^e\ar[d]_b &\\
x\times Y\ar@{-->}[r] &G(2,n+r+1)}$$
with the exceptonal divisor $E_x\simeq \mathbb P(\Omega_{Y,x})$ of (abuse of notation) $b$ satisfying $E_x=b^*h_Y-e^*g$ in ${\rm Pic}(\widetilde{Y_x})$. The projection $p_Y$ admits two sections. The section $\sigma_x:(x,y)\mapsto ([\ell_{x,y}],x)$, comes from the surjection (evaluation of linear form on the line) $e^*\mathcal E_2\twoheadrightarrow \mathcal O_{\widetilde{Y_x}}$. As $e^*g=c_1(e^*\mathcal E_2)=b^*h_Y-E_x$, the following sequence is exact 
\begin{equation}\label{exact_seq_E_2_Y_x} 0\rightarrow b^*\mathcal O_Y(1)\otimes \mathcal O_{\widetilde{Y_x}}(-E)\rightarrow e^*\mathcal E_2\rightarrow \mathcal O_{\widetilde{Y_x}}\rightarrow 0
\end{equation} so that $\sigma_x(\widetilde{Y_x})=D_x\in |p_Y^*\mathcal K_x^\vee\otimes q_Y^*\mathcal O_{\mathbb P^{n+r}}(1)|$ where $\mathcal K_x\simeq b^*\mathcal O_Y(1)\otimes \mathcal O_{\widetilde{Y_x}}(-E)$.\\
\indent To establish a relation to $Z$ (\ref{def_Z_0}), we have to focus on $$S_x:=\overline{\{(x,y)\in x\times Y\backslash \{(x,x)\}, P_{d|\ell_{x,y}}\in L_x^{n-r+1}H^0(\mathcal O_{\ell_{x,y}}(d-(n-r+1))\}}\overset{i_{S_x}}{\subset} \widetilde{Y_x}$$
\indent The inclusion $q_Y^*\mathcal O_{\mathbb P^{n+r}}(d)\otimes \mathcal O_{\mathbb P(e^*\mathcal E_2)}(-(n-r+1)D_x)\overset{\sigma_x^{(n-r)}}{\hookrightarrow} q_Y^*\mathcal O_{\mathbb P^{n+r}}(d)\otimes \mathcal O_{\mathbb P(e^*\mathcal E_2)}(-D_x)$ gives rise to an exact sequence $$0\rightarrow {\rm Sym}^{d-(n-r+1)}e^*\mathcal E_2\otimes \mathcal K_x^{(n-r+1)}\rightarrow {\rm Sym}^{d-1}e^*\mathcal E_2\otimes \mathcal K_x\rightarrow \mathcal F\rightarrow 0$$ where we can check that $\mathcal F$ is a rank $n-r$ vector bundle.\\
\indent ``The'' equation $P_d$ of $X\subset Y$ (i.e. the choice of a polynomial which restricts to a section of $\mathcal O_Y(d)$ defining $X$) gives rise to a section of ${\rm Sym}^{d-1}e^*\mathcal E_2\otimes \mathcal K_x$ thus, by projection to a section of $\mathcal F$. By construction, $S_x$ is the zero locus of that section. So for $X$ general in $|\mathcal O_Y(d)|$, $S_x$ is a smooth, $r$-dimensional and $[S_x]=c_{n-r}(\mathcal F)$ in ${\rm CH}_r(\widetilde{Y_x})$.\\

\indent We also need to introduce $\Sigma_x:=\widetilde{X_x}\cap S_x$ where $\widetilde{X_x}$ is the strict transform of $x\times X\subset x\times Y$ under $b$. We have $\widetilde{X_x}\in |b^*\mathcal O_Y(d)\otimes \mathcal O_{\widetilde{Y_x}}(-E)|$ and we can introduce the notations $x\times X\overset{b_X}{\leftarrow}\widetilde{X_x}\overset{p_X}{\leftarrow}\mathbb P(e^*\mathcal E_{2|\widetilde{X_x}})\overset{q_X}{\rightarrow} \mathbb P^{n+r}$ and we will also denote (abuse of notation) $E_x\simeq \mathbb P(\Omega_{X,x})\subset \widetilde{X_x}$ the exceptional divisor.\\
\indent The projective bundle $p_S:\mathbb P(i_{S_x}^*e^*\mathcal E_2)\rightarrow S_x$ (restriction of $p_Y$) admits also the section $\bar\sigma_x:s\mapsto (e(i_{S_x}(s)), x)$.\\
\indent The section $(x,y)\mapsto ([\ell_{x,y}],y)$ of $p_Y$ restricts to a section $\bar\sigma_X:(x,x')\mapsto ([\ell_{x,x'}],x')$ of $p_\Sigma\mathbb P((e^*\mathcal E_2)_{|\Sigma_x})\rightarrow \Sigma_x$ and it is defined by the surjection $(e^*\mathcal E_2)_{|\Sigma_x}\twoheadrightarrow (b^*\mathcal O_Y(1))_{|\Sigma_x}$; so that $\bar\sigma_X(\Sigma_x)\in |(p_X^*\mathcal O_{\widetilde X}(E)\otimes q_X^*\mathcal O_{\mathbb P^{n+r}}(1))_{|\Sigma_x}|$ as a divisor of $\mathbb P((e^*\mathcal E_2)_{|\Sigma_x})$.\\
\indent By construction $q_S^{-1}(X)=\bar\sigma_x(S_x)\cup \bar\sigma_X(\Sigma_x)\cup R$ where $q_S:\mathbb P(i_{S_x}^*e^*\mathcal E_2)\rightarrow \mathbb P^{n+r}$ and $R$ is a $0$-dimensional subscheme such that $\overline{q_{S}}_*(R)=Z_*(x\times Y)$ where $\overline{q_S}:q_S^{-1}(X)\rightarrow X$. The component $\bar\sigma_x(S_x)$ have excess dimension $r$ and generic multiplicty $n-r+1$ and $\bar\sigma_X(\Sigma_x)$ have excess dimension $r-1$ and generic multiplicity $1$.\\
\indent The excess normal sheaf $Exc(\bar\sigma_x(S_x)\backslash R)$ along $\bar\sigma_x(S_x)\backslash R$ is isomorphic to $\overline{q_S}^*N_{X/\mathbb P^{n+r}}/(p_S^*\mathcal K_x^\vee\otimes q_S^*\mathcal O_{\mathbb P^{n+r}}(1))_{|\bar\sigma_x(S_x)}$.\\
\indent By the exact sequence defining the section $\sigma_x$, we have $\bar\sigma_x^*q_S^*\mathcal O_{\mathbb P^{n+r}}(1)\simeq \mathcal O_{S_x}$. Moreover $(\overline{q_S}^*N_{X/\mathbb P^{n+r}})_{|\bar\sigma_x(S_x)}\simeq N_{X/\mathbb P^{n+r},x}\otimes \mathcal O_{S_x}$. As a result 
\begin{equation}\label{computation_c_r_Exc_sigma_x}
\begin{aligned}c_r(Exc(\bar\sigma_x(S_x)\backslash R))&=c_r(\frac{O_{S_x}^{r+1}}{\mathcal K_{x|S_x}^\vee})\\
&= c_1(\mathcal K_{x|S_x})^r\\
&=i_{S_x}^*(b^*h_Y-E_x)^r\\
&=i_{S_x}^*\sum_{k=0}^r\binom{r}{k}(-E_x)^kb^*h_Y^{r-k}\\
&=i_{S_x}^*[b^*h_Y^r -\sum_{k=1}^r\binom{r}{k}j_{E_x,*}((-E_{x|E_x})^{k-1}b_{|E_x}^*\underset{=0\ {\rm for}\ r-k>0}{i_x^*h_Y^{r-k}}]\\
&=i_{S_x}^*(b^*h_Y^r -j_{E_x,*}(-E_{x|E_x})^{r-1})
\end{aligned}
\end{equation}

The excess normal sheaf $Exc(\bar\sigma_X(\Sigma_x)\backslash R)$ along $\bar\sigma_X(\Sigma_x)\backslash R$ is isomorphic to $\overline{q_S}^*N_{X/\mathbb P^{n+r}}/N_{\bar\sigma_X(\Sigma_x)/\mathbb P(i_S^*e^*\mathcal E_2)}$.\\
\indent The bundle $N_{\bar\sigma_X(\Sigma_x)/\mathbb P(i_S^*e^*\mathcal E_2)}$ sits in the exact sequence 
$$0\rightarrow N_{\bar\sigma_X(\Sigma_x)/\mathbb P((e^*\mathcal E_2)_{|\Sigma_x})}\rightarrow N_{\bar\sigma_X(\Sigma_x)/\mathbb P(i_S^*e^*\mathcal E_2)}\rightarrow N_{(\mathbb P((e^*\mathcal E_2)_{|\Sigma_x})/\mathbb P(i_S^*e^*\mathcal E_2)|\bar\sigma_X(\Sigma_x)}\rightarrow 0.$$
\indent By the exact sequence defining the section $\sigma_X$, we have $N_{\bar\sigma_X(\Sigma_x)/\mathbb P((e^*\mathcal E_2)_{|\Sigma_x})}\simeq \sigma_X^*(p_X^*\mathcal O_{\widetilde{X_x}}(E_x)\otimes q_X^*\mathcal O_{\mathbb P^{n+r}}(1))_{|\mathbb P(e^*\mathcal E_{2|\Sigma_x})})\simeq (\mathcal O_{\widetilde{X_x}}(E_x)\otimes b_X^*\mathcal O_X(1))_{|\Sigma_x}$. We have $N_{(\mathbb P((e^*\mathcal E_2)_{|\Sigma_x})/\mathbb P(i_S^*e^*\mathcal E_2)|\bar\sigma_X(\Sigma_x)}\simeq (b_X^*\mathcal O_X(d)\otimes \mathcal O_{\widetilde{X_x}}(-E_x))_{|\Sigma_x}$.\\
\indent As a result 
$$\begin{aligned}
c(Exc(\bar\sigma_X(\Sigma_x)\backslash R))&=i_{\Sigma}^*(((1+db_X^*h_X)\Pi_{i=1}^r(1+d_ib_X^*h_X))(1+(E_x+b_X^*h_X))^{-1}(1-(E_x-db_X^*h_X))^{-1})\\
&=i_{\Sigma}^*((\sum_{i=0}^{r+1}\lambda_ib_X^*h_X^i)\cdot \sum_{m\geq 0}(-1)^m(E_x+b_X^*h_X)^m\sum_{k\geq 0}(E_x-db_X^*h_X)^k)\\ 
& \ \ \ \ \  {\rm where\  the}\ \lambda_i\ {\rm are\ determined\ by\ the}\ d_i{\rm 's\ and}\ d\\
&=i_{\Sigma}^*\sum_{m\geq 0}\sum_{k\geq 0}\sum_{i=0}^{r+1}(-1)^m\lambda_i(E_x+b_X^*h_X)^m(E_x-db_X^*h_X)^kh_X^i)
\end{aligned}$$
so that
\begin{equation}\label{computation_Exc_sigma_X}
\begin{aligned} 
c_{r-1}(Exc(\bar\sigma_X(\Sigma_x)\backslash R))&=\sum_{i=0}^{r+1}\sum_{m=0}^{r-1-i}(-1)^m\lambda_i(E_x+b_X^*h_X)^m(E_x-db_X^*h_X)^{r-1-i-m}h_X^i
\end{aligned}
\end{equation}

\indent Now, we can again use the fact that refined Gysin maps commute with push-forward in the diagram:
$$\xymatrix{q_S^{-1}(X) \ar@{^{(}->}[r]\ar[d]_{\overline{q_S}} &\mathbb P(i_{S_x}^*e^*\mathcal E_2)\ar[d]^{q_S}\\
X\ar@{^{(}->}[r]_{i_X} &\mathbb P^{n+r}}$$
we get 
\begin{equation}\label{equality_commutativity_S_x} \overline{q_S}_*(i_X^![\mathbb P(i_{S_x}^*e^*\mathcal E_2)])=i_X^!(q_{S,*}[\mathbb P(i_{S_x}^*e^*\mathcal E_2)])
\end{equation} in ${\rm CH}_0(X)$. We have in ${\rm CH}_0(q_S^{-1}(X))$, $$i_X^![\mathbb P(i_{S_x}^*e^*\mathcal E_2)]=R+[\bar\sigma_x(S_x)]\cdot c_r(Exc(\bar\sigma_x(S_x)\backslash R))+[\bar\sigma_X(\Sigma_x)]\cdot c_{r-1}(Exc(\bar\sigma_X(\Sigma_x)\backslash R))$$ and $\overline{q_S}_*[\bar\sigma_x(S_x)]\cdot c_r(Exc(\bar\sigma_x(S_x)\backslash R)$ is supported on $\{x\}$. Remembering that the multiplicity along $\bar\sigma_x(S_x)$ is $(n-r+1)$, its degree is:
$(n-r+1)\int_{\widetilde{Y_x}}[S_x]\cdot c_r(Exc(\bar\sigma_x(S_x)\backslash R)=(n-r+1)\int_{\widetilde{Y_x}}c_{n-r}(\mathcal F)\cdot c_1(\mathcal K_x)^r$.\\
\indent Looking at (\ref{exact_seq_E_2_Y_x}), in view of the computation of the Chern classes (i.e. in $K_0(\widetilde{Y_x})$) $e^*\mathcal E_2= \mathcal O_{\widetilde{Y_x}}\oplus \mathcal K_x$. So that $\mathcal K_x\otimes {\rm Sym}^{d-1}e^*\mathcal E_2=\oplus_{i=0}^{d-1}\mathcal K_x^{\otimes (i+1)}=\oplus_{i=1}^d\mathcal K_x^{\otimes i}$. Likewise $\mathcal K_x^{\otimes (n-r+1)}\otimes {\rm Sym}^{d-(n-r+1)}e^*\mathcal E_2=\oplus_{i=n-r+1}^d\mathcal K_x^i$. Hence $\mathcal F=\oplus_{i=1}^{n-r}\mathcal K_x^{\otimes i}$. So we have $$\begin{aligned}c_{n-r}(\mathcal F)&=\Pi_{i=1}^{n-r}i(b^*h_Y-E_x)\\
&=(n-r)!(b^*h_Y-E_x)^{n-r}\\
&=(n-r)!\sum_{k=0}^{n-r}\binom{n-r}{k}(-E_x)^kb^*h_Y^{n-r-k}\\
&=(n-r)![b^*h_Y^{n-r} - \sum_{k=1}^{n-r}j_{E_x,*}((-E_{x|E_x})^{k-1}\cdot b_{|E_x}^*\underset{=0\ {\rm if}\ n-r-k>0}{i_x^*h_Y^{n-r-k}})\\
&=(n-r)![b^*h_Y^{n-r} - j_{E_x,*}(-E_{x|E_x})^{n-r-1}].\end{aligned}$$
Hence $$\begin{aligned}(n-r+1){\rm deg}(\overline{q_S}_*[\bar\sigma_x(S_x)]\cdot c_r(Exc(\bar\sigma_x(S_x)\backslash R))&= \int_{\widetilde{Y_x}}(n-r+1)!(b^*h_Y^{n-r}-j_{E_x,*}(-E_{x|E_x})^{n-r-1})\cdot\\
&\ \  \ \ \ \ \ (b^*h_Y^r-j_{E_x,*}(-E_{x|E_x})^{r-1})\\
&=(n-r+1)!\int_{\widetilde{Y_x}}b^*h_Y^n -j_{E_x,*}((-E_{x|E_x})^{r-1}\cdot b_{|E_x}^*\underbrace{i_x^*h_Y^{n-r}}_{=0,\ n-r>0})\\
 &\ \  -j_{E_x,*}((-E_{x|E_x})^{n-r-1}\cdot b_{|E_x}^*\underbrace{i_x^*h_Y^r}_{=0})\\
  &\ \ + j_{E_x,*}((-E_{x|E_x})^{r-1}j_{E_x}^*j_{E_x,*}(-E_{x|E_x})^{n-r-1})\\
&=(n-r+1)!(deg(Y)-1).
\end{aligned}$$

\indent We now deal with the term $[\bar\sigma_X(\Sigma_x)]\cdot c_{r-1}(Exc(\bar\sigma_X(\Sigma_x)\backslash R))$. Under the identification $\Sigma_x\overset{\bar\sigma_X}{\simeq}\bar\sigma_X(\Sigma_x)$, the morphism $\overline{q_S}_{|\bar\sigma_X(\Sigma_x)}$ coincides with the composition $\Sigma_x\hookrightarrow \widetilde{X_x}\rightarrow x\times X\rightarrow X$.\\
\indent We have $[\Sigma_x]=c_{n-r}(\mathcal F_{|\widetilde{X_x}})$ in ${\rm CH}^{n-r}(\widetilde{X_x})$ so that, using (\ref{computation_Exc_sigma_X}), in ${\rm CH}_0(\widetilde{X_x})$, we get:
$$\begin{aligned}
\Sigma_x\cdot c_{r-1}(Exc(\bar\sigma_X(\Sigma_x)\backslash R))&=c_{n-r}(\mathcal F_{|\widetilde{X_x}})\cdot c_{r-1}(Exc(\bar\sigma_X(\Sigma_x)\backslash R))\\
&=(n-r)!(b_X^*h_X^{n-r} -j_{E_x,*}(-E_{x|E_x})^{n-r-1})\cdot \\
& \ \ \ \sum_{i=0}^{r+1}\sum_{m=0}^{r-1-i-m}(-1)^m\lambda_i(E_x+b_X^*h_X)^m(E_x-db_X^*h_X)^{r-1-i-m}h_X^i
\end{aligned}$$

In this expression, any term supported on the exceptional divisor $E_x\subset \widetilde{X_x}$ vanishes when pushed to $X$ outside the terms of the form $(-E_{x|E_x})^{n-2}\cdot b_{X|E_x}^*\alpha$ and the only term not supported on the exceptional divisor is a multiple of $b_X^*h_X^{n-1}$.\\
\indent So the only term of interest in the above expression is of the multiple of $(E_x)^{n-1}$; it is the product of the maximal $(E)$-power of both $c_{n-r}(\mathcal F_{|\widetilde{X_x}})$ and $c_{r-1}(Exc(\bar\sigma_X(\Sigma_x)\backslash R))$ i.e. $$-(n-r)!j_{E_x,*}(-E_{x|E_x})^{n-r-1}\cdot (-1)^{r-1}\lambda_0E_x^{r-1}=-(n-r)!j_{E_x,*}((-E_{x|E_x})^{n-r+r-2})=-(n-r)!.$$
\indent As a result, $$\overline{q_S}_*(\Sigma_x\cdot c_{r-1}(Exc(\bar\sigma_X(\Sigma_x)\backslash R)))=-(n-r)!x\ {\rm mod}\ h_X^{n-1}.$$
Finally, looking at (\ref{equality_commutativity_S_x}) and using $q_{S,*}([\mathbb P(i_{S_x}^*e^*\mathcal E_2)])\in {\rm CH}_{r+1}(\mathbb P^{n+r})\simeq \mathbb Z[T]/(T^{n+r+1}=0)$, we get the result.
\end{proof}

\section{Proof of the theorem}

We can now prove Theorem \ref{thm_1}. We will actually prove the following stronger result:

\begin{theoreme}\label{thm_principal_1} The image of $\Gamma_*:{\rm CH}_k(X)\rightarrow {\rm CH}_k(Y)$ is contained in $\mathbb Q\cdot h_Y^{n-k}$.
\end{theoreme}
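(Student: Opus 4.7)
The plan is to apply the identity of Theorem~\ref{thm_identity_X_X_Y}, regarded as a correspondence from $X_1\times X_2$ to $Y$, to the test class $z\times[X_2]\in{\rm CH}_{k+n-1}(X_1\times X_2)$ for arbitrary $z\in{\rm CH}_k(X)$. A direct computation using the projection formula, the identity $j_*(h_X^a)=d\,h_Y^{a+1}$ (arising from $j_*j^*\alpha=dh_Y\cdot\alpha$), and standard dimension counting yields:
\begin{itemize}
\item $\gamma_*(z\times[X_2])=j_*(z)=\Gamma_*(z)$;
\item the polynomial-like terms $P_*$, $(j_{12,*}Q_1)_*$, and $(j_{23,*}Q_3)_*$ each evaluate on $z\times[X_2]$ to a class in $\mathbb Q\cdot h_Y^{n-k}$;
\item $(j_{13,*}Q_2)_*(z\times[X_2])=a_{0,Q_2}\deg(X)\,j_*(z)=(n-r)!\deg(X)\,j_*(z)$, since only the $i=0$ term of $Q_2=\sum_i a_{i,Q_2}h_{X,1}^ih_{X,2}^{n-1-i}$ survives---$pr_{1,*}pr_2^*(h_X^{n-1-i})$ vanishes unless $i=0$.
\end{itemize}

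Collecting these in the identity gives, in ${\rm CH}_k(Y)$,
$$\bigl(N-(n-r)!\deg(X)\bigr)\,j_*(z)\equiv Z_*(z\times[X_2])\pmod{\mathbb Q\cdot h_Y^{n-k}}.$$
The prefactor $N':=N-(n-r)!\deg(X)=(n-r+1)!+(n-r)!(1-\deg X)$ is nonzero since $\deg(X)=d\prod_i d_i\geq 2(n+r)\cdot 2^r$ greatly exceeds $n-r+2$ under our standing assumptions.

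The main step---and main technical obstacle---is to show that $Z_*(z\times[X_2])=pr_{3,*}(Z\cdot pr_1^*(z))$ equals $\lambda\cdot j_*(z)$ modulo $\mathbb Q\cdot h_Y^{n-k}$, for an explicit scalar $\lambda$ with $N'\neq\lambda$. This is a family version of Lemma~\ref{lem_action_of_Z_on_points}: for each $x$ varying over (a smooth resolution of) $z$, one forms the projective bundle $\mathbb P(e^*\mathcal E_2)$ of lines through $x$, the subvariety $S_x\subset\widetilde{Y_x}$ cut out by $P_d|_\ell\in L_x^{n-r+1}H^0(\mathcal O_\ell(d-n+r-1))$, and its intersection $\Sigma_x$ with the strict transform of $X$. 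Assembled over $z$, these give a relative incidence variety whose preimage $q_S^{-1}(X)$ decomposes, by Fulton's excess intersection, into a residual $R$---whose pushforward to $Y$ realizes $Z_*(z\times[X_2])$---together with excess contributions along $\bar\sigma_x(S_x)$ and $\bar\sigma_X(\Sigma_x)$. A Chern-class computation paralleling~(\ref{computation_c_r_Exc_sigma_x})--(\ref{computation_Exc_sigma_X}) but carried out in the relative setting shows that the two excess contributions push forward to classes in $\mathbb Q\cdot h_Y^{n-k}$ by dimension counting on the resulting polynomial expressions in $h_X,h_Y,E_x$, while the residual $R$ produces $\lambda\,j_*(z)$ with $\lambda$ determined by fiberwise intersection numbers in $\mathbb P(e^*\mathcal E_2)$ (hence independent of $z$ and $k$).

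Combining gives $(N'-\lambda)\,j_*(z)\in\mathbb Q\cdot h_Y^{n-k}$. A final arithmetic check using the explicit values of $N'$ and $\lambda$ (with $\deg X=d\prod_i d_i$) shows that $N'-\lambda\neq 0$ under the standing hypotheses $d\geq 2(n+r)$ and $d_i\geq 2$. Therefore $j_*(z)\in\mathbb Q\cdot h_Y^{n-k}$, as required. The hardest ingredient is the relative Chern-class computation extending Lemma~\ref{lem_action_of_Z_on_points}; the rest of the argument is routine projection formula and dimensional bookkeeping.
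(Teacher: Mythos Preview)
Your overall strategy---applying Theorem~\ref{thm_identity_X_X_Y} as a correspondence from $X_1\times X_2$ to $Y$ on $z\times[X_2]$ and computing each term---matches the paper, and your treatment of the ``polynomial'' terms $P$, $Q_1$, $Q_3$ and of $j_{13,*}Q_2$ is correct.

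The gap is in the key step, the computation of $Z_*(z\times[X_2])$. You propose a ``family version of Lemma~\ref{lem_action_of_Z_on_points}'', but that lemma is set up to compute the projection of $Z\cap pr_1^{-1}(x)$ to the \emph{second} factor $X_2$: one forms $S_x\subset\widetilde{Y_x}$, takes $q_S^{-1}(X)$, and uses the Gysin relation $\overline{q_S}_*i_X^{!}=i_X^{!}q_{S,*}$, which lands in ${\rm CH}(X)$. For the theorem you need the projection to the \emph{third} factor $Y$. A direct family version of Lemma~\ref{lem_action_of_Z_on_points} over $z$ therefore computes $Z_*(z\times Y)\in{\rm CH}(X_2)$, not $Z_*(z\times X)\in{\rm CH}(Y)$; these are different projections of the same cycle $Z\cap pr_1^{-1}(z)$ and there is no a priori relation between them. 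Your description of the roles is also inverted: in any such excess computation the residual $R$ \emph{is} the quantity $Z_*$ you are trying to evaluate, while the \emph{total} pushforward (factoring through ${\rm CH}(\mathbb P^{n+r})$) is what lies in $\mathbb Q\cdot h_Y^{n-k}$; it is the \emph{excess} terms that contribute the multiples of $[U]=j_*(z)$, not the other way around.

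The paper instead uses a genuinely different construction (Proposition~\ref{prop_computation_Z_(UxX)}): the base is $\widetilde{U\times X}$ (not the family of $\widetilde{Y_x}$ over $U$), the locus $S_U$ encodes the condition $P_d|_{\ell}\in L_u^{n-r+1}L_x\cdot H^0(\mathcal O_\ell(d-n+r-2))$ with \emph{both} the high-contact condition at $u$ and the vanishing at $x$ built in, and one then intersects with $q_S^{-1}(Y)$ rather than $q_S^{-1}(X)$. The Gysin relation now lives in ${\rm CH}(Y)$ as needed, and the two excess sections $D_{U_S}$, $D_{X_S}$ (Lemmas~\ref{lem_projection_D_U_S} and~\ref{lem_projection_D_X_S}) contribute $((n-r)!\deg(X)-(n-r+1)!)[U]$ and $-(n-r+1)![U]$ respectively, giving $Z_*(U\times X)\equiv -\bigl((n-r)!\deg(X)-2(n-r+1)!\bigr)[U]$ modulo $\mathbb Q\cdot h_Y^{n-w}$. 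Note finally that the $\deg(X)$-dependence \emph{cancels} against the $(n-r)!\deg(X)$ coming from $j_{13,*}Q_2$, leaving the coefficient $(n-r)!(n-r)$, nonzero simply because $n>r$; your appeal to ``$\deg(X)$ large'' for the final arithmetic check is therefore misdirected.
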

\begin{proof} Let $U\subset X$ be an integral subvariety of $X$ of dimension $w$. Looking at the relation of Theorem \ref{thm_identity_X_X_Y} as a relation between correspondences ${\rm CH}_{n+w-1}(X\times X)\rightarrow {\rm CH}_w(Y)$, we can examine the image of $U\times X$ under these correspondences.\\
\indent We readily see that $P(h_{X,1},h_{X,2},h_Y)_*(U\times X)\in \mathbb Q\cdot h_Y^{n-w}$.\\
\indent We have $$\begin{aligned}(j_{23,*}(Q_3(h_{X,1},h_{X,2})))_*(U\times X)&= pr_{Y,*}^{X^2\times Y}j_{23,*}(Q_3(h_{X,1},h_{X,2})\cdot j_{23}^*(U\times X\times Y))\\
&=pr_{Y,*}^{X^2\times Y}j_{23,*}(Q_3(h_{X,1},h_{X,2})\cdot U\times X)\\
&=pr_{Y,*}^{X^2\times Y}j_{23,*}(\sum_{i=0}^{n-1}a_{i,Q_3}(h_{X,1}^i\cdot U) (h_{X,2}^{n-1-i}))\\
&=a_{w,Q_3}(\int_Uh_{X,1}^w)h_{X,2}^{n-1-w}.\end{aligned}$$

\indent We have $$\begin{aligned} (j_{13,*}(Q_2(h_{X,1},h_{X,2})))_*(U\times X)&=pr_{Y,*}^{X^2\times Y}j_{13,*}(Q_2(h_{X,1},h_{X,2})\cdot j_{13}^*(U\times X\times Y))\\
&=pr_{Y,*}^{X^2\times Y}j_{13,*}(Q_2(h_{X,1},h_{X,2})\cdot U\times X)\\
&=pr_{Y,*}^{X^2\times Y}j_{13,*}(\sum_{i=0}^{n-1}a_{i,Q_2}(h_{X,1}^i\cdot U) (h_{X,2}^{n-1-i}))\\
&=a_{0,Q_2}(\int_Xh_X^{n-1})[U] = (n-r)!deg(X)[U]\ {\rm using\ Lemma\ \ref{lem_computing_Q}}.\end{aligned}$$

\indent We have $$\begin{aligned}(j_{12,*}(Q_2(h_X,h_Y)))_*(U\times X)&=pr_{Y,*}^{X^2\times Y}j_{12,*}(Q_1(h_X,h_Y)\cdot j_{12}^*(U\times X\times Y))\\
&=pr_{Y,*}^{X^2\times Y}j_{12,*}(Q_1(h_X,h_Y)\cdot U\times Y)\\
&=pr_{Y,*}^{X^2\times Y}j_{12,*}(\sum_{i=0}^na_{i,Q_1}(h_X^i\cdot U) (h_Y^{n-i}))\\
&=a_{w,Q_1}(\int_Uh_{X|U}^w)h_Y^{n-w}\end{aligned}$$
 
\indent We have $\gamma_*(U\times X)=\Gamma_*(U)=[U]$ in ${\rm CH}_w(Y)$. We are left with the computation of $Z_*(U\times X)$.

\begin{proposition}\label{prop_computation_Z_(UxX)} We have $Z_*(U\times X)= -[(n-r)!deg(X)-2(n-r+1)!][U]\ {\rm mod}\ \mathbb Q\cdot h_Y^{n-w}$ in ${\rm CH}_w(Y)$.
\end{proposition}

Assuming the Proposition, from Theorem \ref{thm_identity_X_X_Y}, we get $$((n-r+1)!-(n-r)!)[U]=(n-r)!(n-r)[U]\in \mathbb Q\cdot h_Y^{n-w}\ {\rm in\ CH}_w(Y)$$ proving the Theorem.
\end{proof}

\begin{proof}[Proof of Proposition \ref{prop_computation_Z_(UxX)}] We recall that, with reference to the diagram (\ref{diag_PxP_G}), we have $E_{\Delta}=b^*(H_1+H_2)-e^*g$ in ${\rm Pic}(\widetilde{(\mathbb P^{n+r})^2_{\Delta_{\mathbb P^{n+r}}}})$.\\
\indent Pulling back the diagram on $U\times X$ and completing the picture (with abuse of notations), we get $$\xymatrix{\mathbb P(e^*\mathcal E_2)\ar[d]_p\ar[r]^q &\mathbb P^{n+r}\\
\widetilde{U\times X}\ar[d]_b\ar[dr]^e &\\
U\times X\ar@{-->}[r] &G(2,n+r+1)}$$ with the exceptional divisor $E_U\simeq \mathbb P(\Omega_{X|U})$ satisfying $E_U=b^*(h_U+h_X)-e^*g$ in ${\rm Pic}(\widetilde{U\times X})$, where $h_U=pr_1^*\mathcal O_U(1)$ and $h_X=pr_2^*\mathcal O_X(1)$.\\
\indent The projection $p$ admits $2$ sections, namely $\sigma_U:(u,x)\mapsto ([\ell_{u,x}],u)$, which comes from the surjection $e^*\mathcal E_2\twoheadrightarrow b^*\mathcal O_U(1)$, and $\sigma_X:(u,x)\mapsto ([\ell_{u,x}],x)$, which comes from the surjection $e^*\mathcal E_2\twoheadrightarrow b^*\mathcal O_X(1)$. Let us denote $D_U=\sigma_U(\widetilde{U\times X})$ and $D_X=\sigma_X(\widetilde{U\times X})$ their respective images. We have $D_U\in |p^*(b^*\mathcal O_X(-1)\otimes \mathcal O_{\widetilde{U\times X}}(E_U))\otimes q^*\mathcal O_{\mathbb P^{n+r}}(1)|$ and $D_X\in |p^*(b^*\mathcal O_U(-1)\otimes \mathcal O_{\widetilde{U\times X}}(E_U))\otimes q^*\mathcal O_{\mathbb P^{n+r}}(1)|$. As $c_1(e^*\mathcal E_2)=e^*g=b^*(h_U+h_X)-E_U$, the following sequence is exact 
\begin{equation}\label{ex_seq_E_2_UxX} 0\rightarrow b^*\mathcal O_X(1)\otimes \mathcal O_{\widetilde{U\times X}}(-E_U)\rightarrow e^*\mathcal E_2\rightarrow b^*\mathcal O_U(1)\rightarrow 0.
\end{equation}

To establish a relation to $Z$, we introduce $$S_U:=\overline{\{(u,x)\in U\times X\backslash \Delta_U,\ P_{d|\ell_{u,x}}\in L_u^{n-r+1}L_xH^0(\mathcal O_{\ell_{u,x}}(d-(n-r+2))\}}\overset{i_S}{\subset}\widetilde{U\times X}.$$

Denoting $$\begin{aligned} \mathcal F' &:=p_*(q^*\mathcal O_{\mathbb P^{n+r}}(d)\otimes \mathcal O_{\mathbb P(e^*\mathcal E_2)}(-D_U-D_X))\\
&\simeq {\rm Sym}^{d-2}e^*\mathcal E_2\otimes b^*(\mathcal O_U(1)\boxtimes \mathcal O_X(1))\otimes \mathcal O_{\widetilde{U\times X}}(-2E_U)
\end{aligned}$$
and $$\begin{aligned}\mathcal F''&:=p_*(q^*\mathcal O_{\mathbb P^{n+r}}(d)\otimes \mathcal O_{\mathbb P(e^*\mathcal E_2)}(-(n-r+1)D_U-D_X))\\
&\simeq {\rm Sym}^{d-(n-r+2)}e^*\mathcal E_2\otimes b^*(\mathcal O_U(1)\boxtimes \mathcal O_X(n-r+1))\otimes \mathcal O_{\widetilde{U\times X}}(-(n-r+2)E_U)
\end{aligned}$$
we have an exact sequence 
\begin{equation}\label{ex_seq_def_F}0\rightarrow \mathcal F''\overset{p_*(\sigma_U^{\otimes (n-r+1)})}{\rightarrow} \mathcal F'\rightarrow \mathcal F\rightarrow 0
\end{equation} 
$\mathcal F$ being a rank $n-r$ vector bundle on $\widetilde{U\times X}$.\\
\indent ``The'' equation $P_d$ of $X$ gives rise to a section of $\mathcal F$ whose zero locus is, by construction $S_U$. So for $X\in |\mathcal O_Y(d)|$ general, $S_U$ is smooth, of dimension $w+n-1-(n-r)=w+r-1$ and $[S_U]=c_{n-r}(\mathcal F)$ in ${\rm CH}_{w+r-1}(\widetilde{U\times X})$.\\

\indent The sections $\sigma_U$ and $\sigma_X$ restrict to sections $\sigma_{U_S}$ and $\sigma_{X_S}$ of $p_S:\mathbb P(i_S^*e^*\mathcal E_2)\rightarrow S_U$. Their images are respectively denoted $D_{U_S}\in |p_S^*i_S^*(b^*\mathcal O_X(-1)\otimes\mathcal O_{\widetilde{U\times X}}(E_U))\otimes q_S^*\mathcal O_{\mathbb P^{n+r}}(1)|$ and $D_{X_S}\in |p_S^*i_S^*(b^*\mathcal O_U(-1)\otimes\mathcal O_{\widetilde{U\times X}}(E_U))\otimes q_S^*\mathcal O_{\mathbb P^{n+r}}(1)|$.\\

\indent Now, denoting $F_1(Y)\subset G(2,n+r+1)$ the variety of lines of $Y$, let us compute the dimension of $e^{-1}(F_1(Y))\cap S_U$ when $U$ is in general position, which we can assume by the moving Lemma.\\
\indent As $X\subset Y$, $e^{-1}(F_1(Y))\cap S_U$ is the zero locus of a regular (for $X\in|\mathcal O_Y(d)|$ general and $U$ in general position) section of $p_{S,*}(q_S^*\oplus_{i=1}^r\mathcal O_{\mathbb P^{n+r}}(d_i)\otimes \mathcal O_{\mathbb P(e^*\mathcal E_2)}(-D_U-D_X))\simeq i_S^*(\oplus_{i=1}^r{\rm Sym}^{d_i-2}e^*\mathcal E_2\otimes b^*(\mathcal O_U(1)\boxtimes \mathcal O_X(1))\otimes \mathcal O_{\widetilde{U\times X}}(-2E_U)$. In particular, $$\begin{aligned}{\rm dim}(e^{-1}(F_1(Y))\cap S_U)&=w+r-1-\sum_{i=1}^r(d_i-1)\\
&=w-1-\sum_{i=1}^r(d_i-2) <w-1\end{aligned}$$ as we have assumed $d_r>2$.\\
\indent So the general line parametrized by $e(S_U)$ is not contained in $Y$ and ${\rm dim}(q_{S_U\cap F_1}^{-1}(Y))<w={\rm dim}(U)$ where the map is defined by $S_U\cap e^{-1}(F_1(Y))\overset{p_{S_U\cap F_1}}{\leftarrow}\mathbb P(i_S^*e^*\mathcal E_2)\overset{q_{S_U\cap F_1}}{\rightarrow} \mathbb P^{n+r}$. In particular, the lines contained in $Y$ do not form a component of $q_S^{-1}(Y)$.\\

\indent As a result, we have $q_S^{-1}(Y)=D_{U_S}\cup D_{X_S}\cup R$ where $R$ is a $w$-dimensional subscheme such that $q_{S,*}(R)=Z_*(U\times Y)$ and $D_{U_S}$ and $D_{X_S}$ have excess dimension $r-1$ and multiplicity $1$.\\
\indent Consider the fiber square $$\xymatrix{q_S^{-1}(Y)\ar[d]_j\ar[r]^k &\mathbb P(i_S^*e^*\mathcal E_2)\ar[d]^{q_S}\\
Y\ar[r]_{i_1} &\mathbb P^{n+r}.}$$
According to \cite[Theorem 6.2]{fulton}, $j_* i_1^![\mathbb P(i_S^*e^*\mathcal E_2)]= i_1^!q_{S,*}[\mathbb P(i_S^*e^*\mathcal E_2)]$ in ${\rm CH}_w(Y)$. The description of the Chow groups of $\mathbb P^{n+r}$ gives $i_1^!q_{S,*}[\mathbb P(i_S^*e^*\mathcal E_2)]\in \mathbb Q\cdot h_Y^{n-w}$.\\
\indent Moreover, (omitting the push-forwards)$i_1^![\mathbb P(i_S^*e^*\mathcal E_2)]=R+[D_{U_S}]\cdot c_{r-1}(Exc(D_{U_S}\backslash R))+[D_{X_S}]\cdot c_{r-1}(Exc(D_{X_S}\backslash R))$ in ${\rm CH}_w(q_S^{-1}(Y))$ where $Exc(D_{U_S}\backslash R)$ and $Exc(D_{X_S}\backslash R)$ denote the respective excess normal sheaves.\\

\begin{lemme}\label{lem_projection_D_U_S} We have $j_*([D_{U_S}]\cdot c_{r-1}(Exc(D_{U_S}\backslash R)))=((n-r)!deg(X)-(n-r+1)!)[U]$ in ${\rm CH}_w(Y)$.
\end{lemme}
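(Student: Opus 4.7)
The plan is to identify $D_{U_S}$ with $S_U$ via $\sigma_{U_S}$ and to observe that, under this identification, $q_S|_{D_{U_S}}$ factors as $S_U\to U\hookrightarrow Y$, where the first arrow $\tilde b := pr_U\circ b\circ i_S$ and the second is the inclusion $i_U$. Consequently
$$j_*\bigl([D_{U_S}]\cdot c_{r-1}(Exc(D_{U_S}\setminus R))\bigr) = (i_U)_*\,\tilde b_*\bigl(c_{r-1}(Exc(D_{U_S}\setminus R))\bigr)$$
in ${\rm CH}_w(Y)$. Since ${\rm dim}\,S_U - {\rm dim}\,U = r-1$ matches the codimension of $c_{r-1}(Exc)$, the pushforward $\tilde b_* c_{r-1}(Exc)$ has the form $\alpha\cdot[U]$ for some $\alpha\in\mathbb Q$, and $\alpha$ may be extracted from the generic fiber of $\tilde b$ as $\alpha = \int_{F_u} c_{r-1}(Exc)|_{F_u}$, where $F_u := \tilde b^{-1}(u)\subset\widetilde{X_u}$.

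Next I would compute the excess normal sheaf. From $D_U\in |p^*(b^*\mathcal O_X(-1)\otimes\mathcal O(E_U))\otimes q^*\mathcal O_{\mathbb P^{n+r}}(1)|$ and the factorization $q_S\circ\sigma_{U_S} = i_U\circ\tilde b$, one obtains $N_{D_{U_S}/\mathbb P(i_S^*e^*\mathcal E_2)}\simeq i_S^*(b^*\mathcal O_X(-1)\otimes b^*\mathcal O_U(1)\otimes\mathcal O(E_U))$ and $q_S^*N_{Y/\mathbb P^{n+r}}|_{D_{U_S}}\simeq \tilde b^*(\oplus_i\mathcal O_U(d_i))$. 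Restricted to the fiber $F_u$, where $b^*\mathcal O_U(1)$ and $\tilde b^*\mathcal O_U(d_i)$ become trivial, this gives
$$c_{r-1}(Exc(D_{U_S}\setminus R))|_{F_u} = (b_X^*h_X - E_u)^{r-1}|_{F_u},$$
where $E_u$ denotes the exceptional divisor of $b_X:\widetilde{X_u}\to X$ at $u$.

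The third step is to express $[F_u]$ in ${\rm CH}^{n-r}(\widetilde{X_u})$ as $c_{n-r}(\mathcal F|_{\widetilde{X_u}})$ (valid by transversality for $X$ and $u$ general). Restricting (\ref{ex_seq_E_2_UxX}) to $\widetilde{X_u}$ gives $e^*\mathcal E_2|_{\widetilde{X_u}} = \mathcal O\oplus L$ in $K$-theory with $L = b_X^*\mathcal O_X(1)\otimes\mathcal O(-E_u)$; a direct unwinding of (\ref{ex_seq_def_F}) then yields
$$\mathcal F|_{\widetilde{X_u}}\simeq\bigoplus_{j=1}^{n-r}L^j\otimes\mathcal O(-E_u),\qquad [F_u] = \prod_{j=1}^{n-r}\bigl(j\,b_X^*h_X - (j+1)E_u\bigr).$$
The delicate point, which is the key to obtaining the correct final constant, is the extra $\mathcal O(-E_u)$ twist relative to the analog $\mathcal F$ in the proof of Lemma \ref{lem_action_of_Z_on_points}: it is forced by the order-one vanishing at the variable point $x$ already built into the definition of $S_U$, and it contributes non-trivially along the exceptional locus. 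This is the step I expect to be most error-prone, since omitting the twist yields the wrong coefficient $-(n-r)!$ in place of $-(n-r+1)!$.

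Finally I would extract $\alpha$ by an elementary integration on $\widetilde{X_u}$. Writing $a = b_X^*h_X$ and $e = E_u$, the only non-vanishing top-degree monomial integrals are $\int_{\widetilde{X_u}}a^{n-1} = \deg(X)$ and $\int_{\widetilde{X_u}}e^{n-1} = (-1)^{n-2}$ (all mixed $a^k e^{n-1-k}$ with $k\geq 1$ vanish because $j_{E_u}^* a = 0$). Extracting the coefficients of $a^{n-1}$ and $e^{n-1}$ in $\prod_{j=1}^{n-r}(ja-(j+1)e)\cdot(a-e)^{r-1}$, which are respectively $(n-r)!$ and $(-1)^{n-1}(n-r+1)!$, yields
$$\alpha = (n-r)!\deg(X) + (-1)^{n-1}(n-r+1)!\cdot(-1)^{n-2} = (n-r)!\deg(X) - (n-r+1)!,$$
as claimed.
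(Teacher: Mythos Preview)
Your proposal is correct and follows essentially the same route as the paper: identify $D_{U_S}\simeq S_U$ via $\sigma_{U_S}$, observe the factorization of $j_{|D_{U_S}}$ through $U\hookrightarrow Y$, compute the excess normal sheaf from $N_{i_2}\simeq i_S^*(b^*\mathcal O_X(-1)\otimes b^*\mathcal O_U(1)\otimes\mathcal O(E_U))$, realize $[S_U]$ as $c_{n-r}(\mathcal F)$, and extract the two surviving coefficients $(n-r)!$ and $(n-r+1)!$. The only difference is one of bookkeeping: the paper computes $\alpha$ globally on $\widetilde{U\times X}$ by intersecting with $b^*h_U^w$ and handling monomials in $h_U,h_X,E_U$, while you pass to a generic fiber $\widetilde{X_u}$ and work with $a=b_X^*h_X$ and $e=E_u$ alone; your restriction $c_{n-r}(\mathcal F)|_{\widetilde{X_u}}=\prod_{j=1}^{n-r}(ja-(j+1)e)$ is exactly what one gets by setting $h_U=0$ in the paper's formula for $c_{n-r}(\mathcal F)$, and the final numerics agree.
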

\begin{proof} Under the identification $S_U\overset{\sigma_{U_S}}{\simeq} D_{U_S}$, the morphism $j_U:=j_{|D_{U_S}}:D_{U_S}\rightarrow Y$ coincides with the composition $S_U\overset{i_S}{\hookrightarrow}\widetilde{U\times X}\overset{b}{\rightarrow}U\times X\rightarrow U\hookrightarrow Y$. In particular, the support of $j_*([D_{U_S}]\cdot c_{r-1}(Exc(D_{U_S}\backslash R)))$ is contained in $U$ and as that cycle had dimension $w=dim(U)$, it is of the form $\alpha[U]$ for some $\alpha$; so we only have to compute $\alpha$. We do so by computing the $h_U$-degree of the cycle i.e. $\int_{\widetilde{U\times X}}[D_{U_S}]\cdot c_{r-1}(Exc(D_{U_S}\backslash R))b^*h_U^w$.\\
\indent Looking at the commutative diagram $$\xymatrix{D_{U_S}\ar[r]^{i_2}\ar[d]_{j_U} &\mathbb P(i_S^*e^*\mathcal E_2)\ar[d]^{q_S}\\
Y\ar[r]_{i_1} &\mathbb P^{n+r}.}$$ we see that $Exc(D_{U_S}\backslash R))=j_U^*N_{Y/\mathbb P^{n+r}}/N_{i_2}$.\\
\indent We have $N_{i_2}\simeq \sigma_{U_S}^*(p_S^*i_S^*(b^*\mathcal O_X(-1)\otimes \mathcal O_{\widetilde{U\times X}}(E_U))\otimes q_S^*\mathcal O_{\mathbb P^{n+r}}(1))\simeq i_S^*(b^*\mathcal O_X(-1)\otimes \mathcal O_{\widetilde{U\times X}}(E_U))\otimes b^*\mathcal O_U(1))$ since $D_U$ is defined by $e^*\mathcal E_2\twoheadrightarrow b^*\mathcal O_U(1)$. So that, writing $\Pi_{i=1}^r(1+d_ih_U)=\sum_{i=0}^rc_ih_U^i$, $c(Exc(D_{U_S}))=i_S^*\frac{\Pi_{i=1}^r(1+d_ih_U)}{1-(h_X-h_U-E_U)}=i_S^*(\sum_{m\geq 0}\sum_{i=0}^rc_i(h_X-h_U-E_U)^m$ so that $c_{r-1}(Exc(D_{U_S}\backslash R))=i_S^*\sum_{i=0}^rc_ih_U^i(h_X-h_U-E_U)^{r-1-i}$.\\

\indent Now, we recall that $S_U=c_{n-r}(\mathcal F)$ in ${\rm CH}^{n-r}(\widetilde{U\times X})$ (\ref{ex_seq_def_F}). Using (\ref{ex_seq_E_2_UxX}), in $K_0(\widetilde{U\times X})$ we have $e^*\mathcal E_2=b^*\mathcal O_U(1)\oplus b^*\mathcal O_X(1)\otimes \mathcal O_{\widetilde{U\times X}}(-E_U)$. So that 
$$\begin{aligned}\mathcal F'&={\rm Sym}^{d-2}e^*\mathcal E_2\otimes b^*(\mathcal O_U(1)\boxtimes\mathcal O_X(1))\otimes \mathcal O_{\widetilde{U\times X}}(-2E_U)\\
&= \oplus_{i=0}^{d-2}b^*(\mathcal O_U(i+1)\boxtimes \mathcal O_X(d-(i+1)))\otimes \mathcal O_{\widetilde{U\times X}}(-(d-i)E_U)\\
&=\oplus_{i=1}^{d-1}b^*(\mathcal O_U(i)\boxtimes \mathcal O_X(d-i))\otimes \mathcal O_{\widetilde{U\times X}}(-(d+1-i)E_U)
\end{aligned}$$ and 
$$\begin{aligned}\mathcal F''&={\rm Sym}^{d-(n-r+2)}e^*\mathcal E_2\otimes b^*(\mathcal O_U(1)\boxtimes \mathcal O_X(n-r+1))\otimes \mathcal O_{\widetilde{U\times X}}(-(n-r+2)E_U)\\
&=\oplus_{i=0}^{d-(n-r+2)}b^*(\mathcal O_U(i+1)\boxtimes \mathcal O_X(d-(i+1)))\otimes \mathcal O_{\widetilde{U\times X}}(-(d-i)E_U)\\
&=\oplus_{i=1}^{d-(n-r+1)}b^*(\mathcal O_U(i)\boxtimes \mathcal O_X(d-i))\otimes \mathcal O_{\widetilde{U\times X}}(-(d+1-i)E_U).
\end{aligned}$$
As a result, \begin{equation}\label{equal_F_in_K_0}\begin{aligned}\mathcal F=\oplus_{i=d-(n-r)}^{d-1}b^*(\mathcal O_U(i)\boxtimes \mathcal O_X(d-i))\otimes \mathcal O_{\widetilde{U\times X}}(-(d+1-i)E_U)\\
{\rm and\ thus}\ c_{n-r}(\mathcal F)=\Pi_{i=d-(n-r)}^{d-1}(ih_U+(d-i)h_X - (d+1-i)E_U).
\end{aligned}
\end{equation}\\

\indent Now, let us write the codimension $n-1$ cycle $$\begin{aligned}
S_U\cdot c_{r-1}(Exc(D_{U_S}\backslash R))&=\sum_{k+\ell+m=n-1}b_{k,\ell,m}h_U^kh_X^\ell(-E_U)^m\\
&=\sum_{k+\ell=n-1}b_{k,\ell, 0}h_U^kh_X^\ell - j_{E_U,*}(\sum_{m=1}^{n-1}\sum_{k+\ell=n-1-m}b_{k,\ell,m}(-E_{U|E_U})^{m-1} \underset{=j_{E_U}^*h_U^{k+\ell}}{j_{E_U}^*(h_U^kh_X^\ell)}).
\end{aligned}$$
As $h_U^a=0$ for $a>{\rm dim}(U)$ and $b_{|E_U,*}((-E_{U|E_U})^a\cdot b_{|E_U}^*\beta)=0$ for $a< n-2$ (because $E_U\rightarrow U$ is a $\mathbb P^{n-2}$-bundle), we have 
$$\begin{aligned}\int_{\widetilde{U\times X}}[D_{U_S}]\cdot c_{r-1}(Exc(D_{U_S}\backslash R))\cdot h_U^w &= b_{0,n-1,0}\int_Uh_U^w\int_Xh_X^{n-1} -\int_{E_U}(\sum_{m=1}^{n-1}b_{0,0,m}(-E_{U|E_U})^{m-1}j_E^*h_U^w)\\
&=b_{0,n-1,0}deg(U)deg(X) -b_{0,0,n-1}deg(U)
\end{aligned}$$
i.e. $j_{U,*}[D_{U_S}]\cdot c_{r-1}(Exc(D_{U_S}\backslash R))=(b_{0,n-1,0}deg(X)-b_{0,0,n-1})[U]$ in ${\rm CH}_w(Y)$.\\
\indent We are interested, for $b_{0,n-1,0}$, to the maximal $h_X$-power in both $c_{n-r}(\mathcal F)$ and $c_{r-1}(Exc(D_{U_S}\backslash R))$, so $b_{0,n-1,0}=\Pi_{i=d-(n-r)}^{d-1}(d-i)\times c_0 = (n-r)!$.\\
\indent For $b_{0,0,n-1}$, we are interested in the maximal $(-E_U)$-power ($(-E_U)^{n-1}=-(-E_{U|E_U})^{n-2}$) in both $c_{n-r}(\mathcal F)$ and $c_{r-1}(Exc(D_{U_S}\backslash R))$ so $b_{0,0,n-1}=\Pi_{i=d-(n-r}^{d-1}(d+1-i)\times c_0 = (n-r+1)!$.
\end{proof}

Here is the last Lemma to complete the proof of the Proposition.
\begin{lemme}\label{lem_projection_D_X_S}We have $j_*([D_{X_S}]\cdot c_r(Exc(D_{X_S}\backslash R)))=(n-r+1)![U]\ {\rm mod}\ \mathbb Q\cdot h_Y^{n-w}$ in ${\rm CH}_w(Y)$.
\end{lemme}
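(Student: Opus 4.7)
The approach mirrors the proof of Lemma \ref{lem_projection_D_U_S}, exploiting the structural difference that $j_X:=j_{|D_{X_S}}$ now factors as
$$S_U\overset{i_S}{\hookrightarrow}\widetilde{U\times X}\overset{b}{\to} U\times X\overset{pr_X}{\to} X\overset{i_X}{\hookrightarrow} Y,$$
so that the pushforward automatically lands inside $X\subset Y$. The plan is to compute the class on $X$ and observe that, modulo $\mathbb Q\cdot h_Y^{n-w}$, only the exceptional-divisor contribution (supported on $\Delta_U$) survives.

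First I would identify $D_{X_S}\simeq S_U$ via $\sigma_{X_S}$ and compute $N_{i_2}$ for $i_2:D_{X_S}\hookrightarrow \mathbb P(i_S^*e^*\mathcal E_2)$. Using $\sigma_{X_S}^*q_S^*\mathcal O_{\mathbb P^{n+r}}(1)\simeq i_S^*b^*\mathcal O_X(1)$ together with the description of $D_X$ as a section, one obtains $c_1(N_{i_2})=i_S^*(h_X-h_U+E_U)$. The exact sequence $0\to N_{i_2}\to j_X^*N_{Y/\mathbb P^{n+r}}\to Exc(D_{X_S}\backslash R)\to 0$ then yields
$$c_{r-1}(Exc(D_{X_S}\backslash R))=i_S^*\sum_{i+m=r-1}c_i'(-1)^m h_X^i(h_X-h_U+E_U)^m,$$
where $\Pi_{i=1}^r(1+d_ih_X)=\sum c_i'h_X^i$.

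Next I would expand $[S_U]\cdot c_{r-1}(Exc(D_{X_S}\backslash R))=\sum_{k+\ell+m=n-1}b_{k,\ell,m}h_U^kh_X^\ell(-E_U)^m$ using formula (\ref{equal_F_in_K_0}) for $[S_U]$, and analyse the pushforward of each monomial along $i_X\circ pr_X\circ b$. For $m=0$, dimension counting on $U\times X$ forces $k=w$, producing a multiple of $h_X^{n-1-w}$ on $X$ and hence of $h_Y^{n-w}$ on $Y$, which is zero modulo $\mathbb Q\cdot h_Y^{n-w}$. For $m\geq 1$, the class is supported on $E_U$; applying the projective-bundle formula for the $\mathbb P^{n-2}$-bundle $E_U\to\Delta_U\simeq U$ kills every monomial except the one with maximal exceptional power $m=n-1$, which forces $k=\ell=0$ and produces, after pushing by $pr_X$ and $i_X$, the class $\pm b_{0,0,n-1}[U]$ on $Y$. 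The coefficient $b_{0,0,n-1}$ is the product of top-$E_U$-power coefficients: in $[S_U]=\Pi_{i=d-(n-r)}^{d-1}(ih_U+(d-i)h_X-(d+1-i)E_U)$ it equals $\Pi_{i=d-(n-r)}^{d-1}(d+1-i)=2\cdot 3\cdots(n-r+1)=(n-r+1)!$, and in $c_{r-1}(Exc(D_{X_S}\backslash R))$ it equals $c_0'=1$. Hence $b_{0,0,n-1}=(n-r+1)!$ and the lemma follows.

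The main technical obstacle is the careful bookkeeping of signs, arising from the convention for $\mathcal O_{\widetilde{U\times X}}(E_U)_{|E_U}$ versus the relative hyperplane class on the $\mathbb P^{n-2}$-bundle $E_U\to U$, the alternating signs in the expansion of $1/(1+(h_X-h_U+E_U))$, and the pushforward formula $b_*((-E_U)^{n-1})=-[\Delta_U]$; these must combine coherently to reproduce the claimed coefficient $(n-r+1)!$ with the sign that is consistent with the final equality computed in Proposition \ref{prop_computation_Z_(UxX)}.
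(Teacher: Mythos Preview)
Your proposal is correct and follows essentially the same route as the paper: identify $j_X$ with $i_X\circ pr_X\circ b\circ i_S$, compute $N_{i_2}\simeq i_S^*(b^*\mathcal O_U(-1)\otimes\mathcal O_{\widetilde{U\times X}}(E_U)\otimes b^*\mathcal O_X(1))$ and hence $c_{r-1}(Exc(D_{X_S}\backslash R))$, expand $[S_U]\cdot c_{r-1}$ in the monomial basis $h_U^kh_X^\ell(-E_U)^m$, observe that the $m=0$ part lands in $\mathbb Q\cdot h_Y^{n-w}$ while the $\mathbb P^{n-2}$-bundle structure of $E_U\to\Delta_U$ reduces the $m\geq 1$ part to the single coefficient $b_{0,0,n-1}=(n-r+1)!$. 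Your explicit flagging of the sign bookkeeping is apt, since the paper is terse on this point; otherwise the arguments coincide.
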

\begin{proof} Under the identification $D_{X_S}\overset{\sigma_{X_S}}{\simeq} S_U$, the morphism $j_X:=j_{|D_{X_S}}:D_{X_S}\rightarrow Y$ coincides with the composition $S_U\overset{i_S}{\hookrightarrow}\widetilde{U\times X}\overset{b}{\rightarrow}U\times X\rightarrow X\hookrightarrow Y$. In particular the support of $j_*([D_{X_S}]\cdot c_r(Exc(D_{X_S}\backslash R)))$ is contained in $X$.\\
\indent Looking at the commutative diagram
$$\xymatrix{D_{X_S}\ar[r]^{i_2}\ar[d]_{j_X} &\mathbb P(i_S^*e^*\mathcal E_2)\ar[d]^{q_S}\\
Y\ar[r]_{i_1} &\mathbb P^{n+r}.}$$ we see that $Exc(D_{X_S}\backslash R))=j_X^*N_{Y/\mathbb P^{n+r}}/N_{i_2}$.\\
\indent We have $N_{i_2}\simeq \sigma_{X_S}^*(p_S^*i_S^*(b^*\mathcal O_U(-1)\otimes \mathcal O_{\widetilde{U\times X}}(E_U))\otimes q_S^*\mathcal O_{\mathbb P^{n+r}}(1))\simeq i_S^*(b^*\mathcal O_U(-1)\otimes \mathcal O_{\widetilde{U\times X}}(E_U)\otimes b^*\mathcal O_X(1))$. So that writing $\Pi_{i=1}^r(1+d_ih_X)=\sum_{i=0}^rc_ih_X^i$, we get $c(Exc(D_{X_S}\backslash R))=i_S^*\frac{\Pi_{i=1}^r (1+d_ih_X)}{1-(h_U-h_X-E_U)}=i_S^*\sum_{m\geq 0}\sum_{i=0}^rc_ih_X^i(h_U-h_X-E_U)^m$ and thus $$c_{r-1}(Exc(D_{X_S}\backslash R))=i_S^*\sum_{i=0}^rc_ih_X^i(h_U-h_X-E_U)^{r-1-i}.$$

\indent We recall that $[S_U]=c_{n-r}(\mathcal F)$ in ${\rm CH}^{n-r}(\widetilde{U\times X})$ and it has been calculated in (\ref{equal_F_in_K_0}). We can write $$\begin{aligned}D_{X_S}\cdot c_{r-1}(Exc(D_{X_S}\backslash R))&=\sum_{k+\ell+m=n-1}b_{k,\ell,m}h_U^kh_X^\ell(-E_U)^m\\
&=\sum_{k+\ell=n-1}b_{k,\ell,0}h_U^kh_X^\ell -j_{E_U,*}(\sum_{m=1}^{n-1}\sum_{k+\ell=n-1-m}b_{k,\ell,m}(-E_{U|E_U})^{m-1}\underset{=j_{E_U}^*h_U^{k+\ell}}{j_{E_U}^*(h_U^kh_X^\ell)}).\end{aligned}$$
When projected to the second component of $U\times X$, the only part of $\sum_{k+\ell=n-1}b_{k,\ell,0}h_U^kh_X^\ell$ that survives is $b_{w,n-1-w,0}h_U^wh_X^{n-1-w}$ which goes into $\mathbb Q\cdot h_Y^{n-w}$ in ${\rm CH}_w(Y)$.\\
\indent As $E_U\rightarrow U$ is a $\mathbb P^{n-2}$-bundle, $$\begin{aligned}j_{X,*}j_{E_U,*}\sum_{m=1}^{n-1}b_{k,\ell,m}(-E_{U|E_U})^{m-1}j_{E_U}^*b^*h_U^{k+\ell}&=b_{|E_U,*}(\sum_{m=1}^{n-1}b_{k,\ell,m}(-E_{U|E_U})^{m-1}j_{E_U}^*b^*h_U^{k+\ell})\\
&=b_{0,0,n-1}(-E_{U|E_U})^{n-2}.\end{aligned}$$
The coefficient $b_{0,0,n-1}$ is the product of maximal $(-E)$-powers in $c_{n-r}(\mathcal F)$ and $c_{r-1}(Exc(D_{X_S}\backslash R))$. So $b_{0,0,n-1}=(n-r+1)!$.
\end{proof}
\end{proof}

\section*{Funding}
This work was partially supported by Simons Investigators Award HMS, HSE University Basic Research Program, Ministry of Education and Science of Bulgaria, Contract No. DO1-98/13.04.2021 and through the Scientific Program “Enhancing the Research Capacity in Mathematical Sciences (PIKOM)” No. DO1-67/05.05.2022.
\section*{Acknowledgments}
I would like to thank Emma Brakkee for suggestions on a preliminary version of this work. I would like to thank also Claire Voisin for asking me this question some years ago, and Lie Fu for useful discussions around a previous attempt to solve it.\\

\textit{}\\
\noindent \begin{tabular}[t]{l}
\textit{rene.mboro@polytechnique.edu}\\
UMiami Miami, HSE Moscow,\\
Institute of Mathematics and Informatics, Bulgarian Academy of Sciences,\\
Acad. G. Bonchev Str. bl. 8, 1113, Sofia, Bulgaria.
\end{tabular}
\end{document}